\documentclass[9pt]{article}

\usepackage{amsmath}
\usepackage{amsfonts}
\usepackage{amsmath,amsthm}
\usepackage{amssymb}
\usepackage{latexsym}
\usepackage{amscd}
\usepackage{bm} 
\usepackage{stmaryrd} 
\usepackage[pdftex]{graphicx}

\theoremstyle{definition}
\newtheorem{thm}{\bf Theorem}[section]
\newtheorem{lem}[thm]{\bf Lemma}
\newtheorem{cor}[thm]{\bf Corollary}
\newtheorem{prop}[thm]{\bf Proposition}
\newtheorem*{acknowledge}{Acknowledgements}
\theoremstyle{definition}
\newtheorem{defn}[thm]{\bf Definition}
\theoremstyle{remark}

\newtheorem{claim}{\bf Claim}

\newtheorem{ques}[thm]{\bf Question}

\newcommand{\A}{\mathcal A}

\def\ri{\rightarrow}

\def\Om{\Omega}

\newcommand{\T}{\mathbb T}

\newcommand{\R}{\mathbb R}
\newcommand{\z}{\mathbb Z}

\def\im{\text{Im }}

\def\inner<#1>{\langle #1 \rangle}

\def\T{\mathcal T}
\DeclareMathOperator{\Spin}{Spin}

\begin{document}

\title{Rational homology 3-spheres and simply connected definite bounding}
\date{ }

\author{Kouki Sato and Masaki Taniguchi}

\maketitle
\begin{abstract}
For each rational homology 3-sphere $Y$ which bounds simply connected definite 4-manifolds of both signs,
we construct an infinite family of irreducible rational homology 3-spheres which are homology cobordant to $Y$ but cannot bound any simply connected definite 4-manifold.
As a corollary, for any 
coprime
integers $p,q$, we obtain an infinite family of irreducible rational homology 3-spheres 
which are homology cobordant to 
the lens space
$L(p,q)$ but cannot obtained by a knot surgery.
\end{abstract}


\section{Introduction}
Throughout this paper, all manifolds are assumed to be smooth, compact, orientable and oriented, and diffeomorphisms are orientation-preserving unless otherwise stated.

The intersection form of a $4n$ dimensional manifold has been used to study the topology of its boundary.
For instance, the first exotic 7-spheres discovered by Milnor \cite{Mi56} were distinguished by using the intersection form of 8-manifolds whose boundaries are the exotic 7-spheres.   
In the case of dimension 4, Donaldson's diagonalization theorem \cite{Do83} implies
that if a homology 3-sphere bounds a 4-manifold with non-diagonalizable
definite intersection form, then it cannot bound any rational homology 4-ball.

In light of the above results, for any 3-manifold $Y$, it seems natural to ask which 
bilinear forms
are realized by 
the intersection form of
a 4-manifold with boundary $Y$.
In the case where $Y$ is a rational homology 3-sphere, Choe and Park \cite{CP17} define $\T(Y)$
(resp.\ $\T^{TOP}(Y)$) as the set of all negative definite 
bilinear forms
realized by
the intersection form of 
a (resp.\ topological) 4-manifold with boundary $Y$, up to stable-equivalence. 
They prove in \cite{CP17} that $|\T^{TOP}(Y)|= \infty$ for any $Y$, while $|\T(Y)|< \infty$ if $\T(-Y)$ is not empty. Moreover, 
they show that either $\T(Y) \neq \emptyset$ or $\T(-Y) \neq \emptyset$ holds for any Seifert rational homology sphere $Y$.
Here we note that all 4-manifolds constructed in their proof of the above results are simply connected, and hence if we define $\T_s(Y)$ (resp.\ $\T_s^{TOP}(Y)$) by replacing ``4-manifolds''
in the definition of $\T(Y)$ (resp.\ $\T^{TOP}(Y)$) with ``simply connected 4-manifolds'',
then we can prove that 
$|\T_s^{TOP}(Y)|= \infty$ and $\T_s(-Y) \neq \emptyset \Rightarrow |\T_s(Y)|< \infty$ for any $Y$,
and either $\T_s(Y) \neq \emptyset$ or $\T_s(-Y) \neq \emptyset$ holds if $Y$ is Seifert fibered.
Then, how different are they? The aim of this paper is  
to prove the following theorem, which shows a big gap between $\T(Y)$ and $\T_s(Y)$.
\begin{thm}
\label{main thm}
For any rational homology 3-sphere $Y$ satisfying $\T_s(Y)\neq \emptyset $ and $\T_s(-Y)\neq \emptyset$, there exist infinitely many rational homology 3-spheres $\{Y_k\}_{k=1}^{\infty}$ 
which satisfy the following conditions.
\begin{enumerate}
\item $Y_k$ is homology cobordant to $Y$. 
\item $\T(Y_k)=\T(Y) \neq \emptyset$ and $\T(-Y_k)=\T(-Y) \neq \emptyset$.  
\item $\T_s(Y_k)=\emptyset $ and $\T_s(-Y_k)=\emptyset$.
\item If $k\neq k'$ then $Y_k$ is diffeomorphic to neither $Y_{k'}$ nor $-Y_{k'}$.
\item Each $Y_k$ is irreducible  and toroidal.
\end{enumerate}
\end{thm}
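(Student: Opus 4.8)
The plan is to produce the whole family $Y_k$ by one splice-type construction on $Y$, chosen so that (1), (2), (4), (5) are soft consequences of the construction while all of the analytic content sits in (3). Fix a non-amphichiral hyperbolic knot $K_0\subset S^3$ with $\Delta_{K_0}(t)=1$ (e.g.\ the Conway knot), and fix a geometrically essential null-homologous knot $\gamma\subset Y$ with $\Delta_\gamma(t)=1$ (for instance an untwisted Whitehead double of an essential knot; I will assume $Y$ irreducible, the reducible case requiring only a routine variant, and I will choose $\gamma$ so that $E(\gamma)$ shares no JSJ piece with $E(K_0)$ or its mirror). Let $Y_k$ be the splice of $(Y,\gamma)$ with $(S^3,\#^kK_0)$ along their peripheral tori. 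Since $\#^kK_0$ is a nontrivial knot in $S^3$ with trivial Alexander polynomial, a Mayer--Vietoris computation gives $H_1(Y_k)\cong H_1(Y)$, so $Y_k$ is a rational homology $3$-sphere.

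For (1), I would build a smooth homology cobordism $W$ from $Y$ to $Y_k$ by hand: a handle presentation of $E(\#^kK_0)$ together with the vanishing of its Alexander polynomial gives an explicit cobordism, rel the peripheral torus, from it to the trivial filling, and this cobordism is a $\mathbb{Z}$-homology product precisely because $\Delta_{\#^kK_0}=1$ (this step is technical but conceptually routine handle calculus). Given (1), condition (2) is immediate: gluing $W$, resp.\ $-W$, onto any $4$-manifold bounded by $Y$, resp.\ $Y_k$, changes neither the intersection form at the other end nor its definiteness, so $\T(Y_k)=\T(Y)$ and $\T(-Y_k)=\T(-Y)$; these are nonempty since $\T_s\subseteq\T$, and finite by the cited theorem of Choe--Park. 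Conditions (4) and (5) are read off the splicing decomposition: the splicing torus is essential and the two pieces $E(\gamma)$ and $E(\#^kK_0)$ are irreducible, so $Y_k$ is irreducible and toroidal, while the $k$ hyperbolic JSJ pieces isometric to the exterior of $K_0$ --- together with the non-amphichirality of $K_0$ --- separate the $Y_k$ from one another and from the $-Y_{k'}$, by uniqueness of the JSJ decomposition and of prime factorization of knots.

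Condition (3) is the crux. Suppose $\T_s(Y_k)\neq\emptyset$, say $Y_k=\partial X$ with $X$ simply connected and negative definite; by (2) the form of $X$ lies, up to stable equivalence, in the finite set $\T(Y)$. Capping $X$ along $W$ with a simply connected negative definite filling of $-Y$ (available because $\T_s(-Y)\neq\emptyset$) produces a closed negative definite $4$-manifold with $b_1=0$, which Donaldson's theorem diagonalizes; but since the forms in $\T(Y)$ may already be essentially standard, this alone gives no contradiction. The extra ingredient must be gauge-theoretic and, crucially, \emph{not} a homology cobordism invariant --- every homology cobordism invariant of $Y_k$ equals that of $Y$, and $Y$ \emph{does} bound simply connected definite $4$-manifolds. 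I would use the instanton Floer homology $I_\ast(Y_k)$ itself (rather than Fr\o yshov's derived number $h$): a splicing/surgery formula should express it through the knot instanton homology of $\#^kK_0$, whose rank grows without bound, forcing $\dim I_\ast(Y_k)\to\infty$ even though $Y_k$ stays in the homology cobordism class of $Y$. Against this one needs a matching inequality: any \emph{simply connected} negative definite filling whose form has bounded stable type must bound $I_\ast(Y_k)$ uniformly, via the reducible connections in its ASD moduli space --- and here $\pi_1(X)=1$ is used essentially, since then the flat $SU(2)$ connections on $Y_k$ that extend over $X$ are rigidly pinned down by $H^\ast(X)$, whereas $H_1(X)=0$ alone would not force this; this is exactly the phenomenon making $\T_s$ possibly strictly smaller than $\T$. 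For $k$ large the two are incompatible, so $\T_s(Y_k)=\emptyset$, and rerunning the argument on $-Y_k$ (using $\T_s(Y)\neq\emptyset$ for the capping filling, and non-amphichirality of $K_0$) gives $\T_s(-Y_k)=\emptyset$. I expect the genuinely hard part to be precisely this dichotomy --- proving the splicing formula that makes the instanton invariant of $Y_k$ blow up inside a fixed homology cobordism class, and proving the companion upper bound for simply connected definite fillings --- with the extra bookkeeping of reducibles for $4$-manifolds whose boundary is only a rational (not integral) homology sphere as a further technical wrinkle.
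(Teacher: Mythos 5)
There are two genuine gaps, and they sit exactly where the content of the theorem lives.

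First, your condition (1) does not follow from $\Delta_{\#^kK_0}=1$. Triviality of the Alexander polynomial gives you the homology of the spliced manifold and, via Freedman, a \emph{topological} homology cobordism (a topologically slice knot yields a slice-disk-complement cobordism); it does not give a \emph{smooth} homology cobordism, and no ``routine handle calculus'' will produce one. Indeed, the standard counterexamples to ``Alexander polynomial one implies smoothly slice'' (Whitehead doubles of the trefoil, and the Conway knot itself, which you propose to use) show that the smooth rel-boundary homology cobordism from $E(\#^kK_0)$ to the trivial filling that you invoke need not exist. Since (2) and the capping construction in (3) both rest on this cobordism, the whole chain breaks here. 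The paper avoids this by not splicing with a knot exterior at all: it replaces a tubular neighborhood of a hyperbolic null-homologous knot by Auckly's specific homology solid torus $C$, for which an explicit one-$1$-handle, one-$2$-handle homology cobordism $W_M$ exists by construction, with the additional crucial properties that $\pi_1(Y_M)\to\pi_1(W_M)$ is surjective and $\pi_1$ of the other end maps bijectively.

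Second, and more seriously, your argument for condition (3) --- which you correctly identify as the crux --- is not a proof but a description of a hoped-for mechanism: a splicing formula forcing $\dim I_*(Y_k)\to\infty$ inside a fixed homology cobordism class, paired with an upper bound on $\dim I_*$ coming from simply connected definite fillings. Neither statement is a known theorem, and you acknowledge as much. The paper's actual mechanism is different and concrete: it builds $Y_k$ not from $Y$ alone but from $Y\# M\#(-M)$ with $M=\Sigma(2,3,6n-1)$ (so the homology cobordism class is unchanged, but an auxiliary manifold with $|\T_s(M)|>1$ is smuggled in). A hypothetical simply connected definite filling $X$ of $Y_k$ is glued to $-W_M$, capped with a simply connected definite filling of $-Y$, and surgered into a \emph{simply connected} negative definite self-cobordism of $M$; Taubes's end-periodic diagonalization theorem then forces $|\T_s(M)|\le 1$, a contradiction. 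The $\pi_1$-surjectivity of $Y_k\hookrightarrow W_M$ is what makes the glued-up manifold simply connected, and this is precisely where ``simply connected'' (rather than $H_1=0$) enters --- the point you gesture at but do not substantiate. Finally, the paper distinguishes the $Y_k$ not by JSJ data but by the Furuta--Fintushel--Stern minimal Chern--Simons invariant $\epsilon$, pushed through $W_M$ using the $\pi_1$-bijectivity at the incoming end; your JSJ argument is plausible for your own construction but is moot given the gaps above.
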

Here, rational homology 3-spheres $Y_0$ and $Y_1$ are {\it homology cobordant} if there exists a cobordism $W$ from $Y_0$ to $Y_1$ (i.e. $\partial W = (-Y_0) \amalg Y_1$) such that the inclusion $Y_{i} \hookrightarrow W$ induces an isomorphism between $H_*(Y_i;\z)$ and $H_*(W;\z)$ for each $i \in \{0,1\}$.
(Then we call $W$ a {\it homology cobordism}.)
We note that since $\T(Y)$ is invariant under homology cobordism (more generally, rational homology cobordism),
the first condition implies the second condition. Moreover, the third condition implies that any $Y_k$ is non-Seifert. 
We also note that there exist infinitely many rational homology 3-spheres satisfying
$\T_s(Y)\neq \emptyset $ and $\T_s(-Y)\neq \emptyset$.
For instance, any $p/q$ surgery of $S^3$ over any $0$-negative knot 
(defined in \cite{CHH13})
with $p/q>0$ satisfies this condition.
(In this case, there is a negative definite cobordism 
$W$
from the lens space $L(p,q)$ to such a $p/q$ surgery 
such that $i_*(\pi_1(L(p,q)))$ normally generates $\pi_1(W)$,
and $\T_s(L(p,q)) \neq \emptyset $.)

In order to prove Theorem \ref{main thm},
we first prove the following proposition, which is obtained by generalizing Auckly's construction in \cite{Au}.
\begin{prop}
\label{main prop}
For any rational homology 3-spheres $Y$ and $M$, 
there exist a rational homology 3-sphere $Y_M$ and a homology cobordism $W_M$
from $Y\#M\#(-M)$ to $Y_M$ 
which satisfy
\begin{enumerate} 
\item $i_*: \pi_1(Y_M) \to \pi_1(W_M)$ is surjective,
\item $i_*: \pi_1(Y\#M\#(-M)) \to \pi_1(W_M)$ is bijective, and
\item $Y_M$ is irreducible and toroidal,
\end{enumerate}
where $i_*$ denotes the induced homomorphism from the inclusion.
\end{prop}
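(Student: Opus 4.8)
The plan is to construct $Y_M$ and $W_M$ by adapting Auckly's surgery construction: start with the connected sum $Y \# M \# (-M)$, which bounds an obvious homology cobordism, and modify it to kill reducibility and the extra fundamental group coming from the $M \# (-M)$ summand while introducing an essential torus. First I would recall Auckly's construction from \cite{Au}: given the ``doubled'' summand $M \# (-M)$, there is a natural way to realize $(M \# (-M)) \times [0,1]$ inside a homology cobordism, and by a suitable cut-and-paste (a logarithmic-transform-type or knot-surgery-type modification along a torus $T \subset (M\#(-M))\times\{1/2\}$) one can change the 3-manifold on one side without changing homology. The generalization needed here is to carry $Y$ along as a passive summand, so the construction must be performed away from $Y$ (supported in a neighborhood of the $M \# (-M)$ part), ensuring $Y$ survives intact in the boundary while the cobordism $W_M$ restricted to that part is still a homology cobordism.

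The key steps, in order, would be: (1) Build a cobordism $W_M$ from $Y \# M \# (-M)$ to some $Y_M$ by attaching a round $1$-handle (or performing the relevant torus surgery) along a suitable curve/torus in $M \# (-M)$ chosen so that the resulting trace is a homology cobordism — this uses that the surgery curve is null-homologous, so $H_*$ is unchanged. (2) Verify condition (2), that $\pi_1(Y \# M \# (-M)) \to \pi_1(W_M)$ is an isomorphism: surjectivity is automatic from how handles are attached, and injectivity requires checking that the attaching region of the handle carries a loop that is already trivial (or that the van Kampen quotient introduces no new relations on the $\pi_1$ of the old boundary), which is where the precise choice in Auckly's construction matters. (3) Verify condition (1), that $\pi_1(Y_M) \to \pi_1(W_M)$ is surjective: this follows because $Y_M$ is obtained from $Y\#M\#(-M)$ by surgery and handle attachment, and the cocore of the handle provides a disk killing the only potential new generator, so the inclusion of the new boundary is already $\pi_1$-surjective. (4) Arrange irreducibility and toroidality of $Y_M$: choose the torus along which one does the surgery (and the gluing diffeomorphism) so that $Y_M$ contains an incompressible torus, e.g. by a Seifert-fibered-piece splicing or a suitable satellite-type construction, and use standard $3$-manifold techniques (e.g.\ the torus is incompressible and non-boundary-parallel, hence $Y_M$ is toroidal, and irreducibility follows from irreducibility of the pieces glued along the incompressible torus, possibly after arranging the summands to be irreducible or passing to an appropriate connected-sum-free model).

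The main obstacle I expect is step (2)/(4) together: simultaneously ensuring that the surgery/handle-attachment does \emph{not} enlarge $\pi_1$ of the incoming boundary (so $M \# (-M)$'s fundamental group is genuinely absorbed, i.e.\ the map is bijective, not merely surjective) \emph{while} producing a manifold $Y_M$ that is irreducible and toroidal. These pull in opposite directions — killing $\pi_1$ contributions tends to make things simple/reducible, whereas toroidality wants essential incompressible tori — so the construction must be arranged delicately, presumably by doing the $\pi_1$-trivializing surgery on a null-homotopic (not just null-homologous) framed link built from commutators representing the $M$ and $-M$ generators, and then separately performing a torus surgery in a region where the fundamental group is already controlled. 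Checking incompressibility of the resulting torus (likely via a Seifert-fibered or hyperbolic JSJ piece) and irreducibility of $Y_M$ will require the standard but nontrivial $3$-manifold topology arguments, and making these compatible with the homology-cobordism and $\pi_1$ conditions is the delicate heart of the proof.
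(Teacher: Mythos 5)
Your plan captures the right neighborhood (an Auckly-style homology cobordism built by a local modification near the $M\#(-M)$ summand), but there is a conceptual confusion that would lead you astray, and the concrete mechanism is different from what actually works.

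The confusion is in the direction of the $\pi_1$ maps. You say the modification should ``kill reducibility and the extra fundamental group coming from the $M\#(-M)$ summand,'' and you frame the main difficulty as a tension between killing $\pi_1$ and getting an incompressible torus. But look at what the proposition actually asserts: $\pi_1(Y\#M\#(-M)) \to \pi_1(W_M)$ is a \emph{bijection}, so nothing from $M\#(-M)$ is killed in the cobordism; and $\pi_1(Y_M)\to\pi_1(W_M)$ is merely \emph{surjective}, so $\pi_1(Y_M)$ may (and does) strictly contain $\pi_1(W_M)\cong\pi_1(Y\#M\#(-M))$ as a quotient. The construction makes $Y_M$ have a \emph{larger, amalgamated-product} fundamental group, not a smaller one; that enlargement is exactly what gives irreducibility and the essential torus. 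So the ``opposite pulls'' you worry about do not arise: there is no $\pi_1$-killing step at all.

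The mechanism in the paper is also different from your round-handle/torus-surgery/logarithmic-transform plan. One first invokes Tsutsumi's theorem to find a null-homologous knot $K\subset Y\#M\#(-M)$ whose exterior $E_K$ is hyperbolic, and then forms $Y_M$ by gluing to $E_K$, along the boundary torus, a specific homology solid torus $C$ (the one Auckly uses), sending $\mu\mapsto$ meridian and $\lambda\mapsto$ preferred longitude. Because $C$ is a homology solid torus, $Y_M$ is again a rational homology sphere. Irreducibility and toroidality then follow cleanly from the splitting of $Y_M$ along the incompressible torus $\partial C$ into two irreducible pieces: $C$ (Auckly already shows $C$ is irreducible with incompressible boundary) and the hyperbolic $E_K$ (irreducible, with incompressible boundary because $K$ is not unknotted). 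The Loop Theorem plus the injectivity of the factors into the amalgamated free product $\pi_1(C)*_{\pi_1(\partial C)}\pi_1(E_K)$ closes the incompressibility argument. The cobordism $W_M$ is given by an explicit relative Kirby diagram with one $1$-handle and one $2$-handle: surjectivity of $\pi_1(Y_M)\to\pi_1(W_M)$ comes from the dual decomposition (one $2$-handle, one $3$-handle), while bijectivity of $\pi_1(Y\#M\#(-M))\to\pi_1(W_M)$ is a short presentation computation: the $2$-handle relator has the form $xw$ with $x$ the new $1$-handle generator, so one can write down an explicit inverse by sending $x\mapsto w^{-1}$. Your step (2) gestures at this kind of argument but without the concrete handle structure it would remain a hope rather than a proof, and without the Tsutsumi input you have no handle on toroidality. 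The missing ingredients you would need to supply are precisely: the null-homologous hyperbolic knot, the specific homology solid torus $C$, and the realization that $\pi_1(Y_M)$ is meant to be bigger, not smaller.
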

Then, by assuming that $\T_s(Y)\neq \emptyset $, $\T_s(-Y)\neq \emptyset$ and $|\T_s(M) |>1$, and combining the first condition with Taubes's theorem in \cite{T87} (stated as Theorem \ref{taubes} in Section 3), we prove that $\T_s(Y_M)=\emptyset$ and $\T_s(-Y_M)= \emptyset$. 
Finally, we combine the second condition with the Chern-Simons invariants for 3-manifolds to find an infinite family $\{M_k\}_{k=1}^{\infty}$ of integer homology 3-spheres such that the 3-manifolds $\{Y_{M_k}\}_{k=1}^{\infty}$ are mutually distinct. (Note that if $M$ is an integer homology 3-sphere, then $Y\#M\#(-M)$ is homology cobordant to $Y$.)

As an application of Theorem \ref{main thm},
we provide a huge number of irreducible rational homology 3-spheres that are not obtained by  a knot surgery.
Here we note that if $Y$ is obtained by a knot surgery, 
then either $\T_s(Y)\neq\emptyset $ or $\T_s(-Y)\neq \emptyset$ holds (see \cite{OwSt12}).
Hence the 3-manifolds $\{Y_k\}_{k=1}^{\infty}$ in Theorem~\ref{main thm} are not obtained by  a knot surgery. 
Therefore, for instance, we have the following corollary.
\begin{cor}
\label{cor}
For any non-zero integers $p,q$, there are infinitely many irreducible rational homology 3-spheres which are homology cobordant to $L(p,q)$ but not obtained by a knot surgery.
\end{cor}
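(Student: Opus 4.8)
The plan is to specialize Theorem \ref{main thm} to the lens space. Set $Y = L(p,q)$; since $L(p,q)$ denotes a lens space we may assume $p$ and $q$ are coprime with $p \geq 1$ and $0 \leq q < p$, and then $L(p,q)$ is a rational homology $3$-sphere (with $|H_1(L(p,q);\z)| = p$). To invoke Theorem \ref{main thm} the only thing to check is its hypothesis, namely $\T_s(L(p,q)) \neq \emptyset$ and $\T_s(-L(p,q)) \neq \emptyset$. For this I would use the classical Hirzebruch--Jung description: for $p \geq 2$, writing $p/q = [a_1,\ldots,a_n]$ as a continued fraction with all $a_i \geq 2$ produces a linear plumbing $4$-manifold $X$ on a chain of $2$-spheres with weights $-a_1,\ldots,-a_n$; this $X$ is simply connected, its intersection form is negative definite, and $\partial X = L(p,q)$, so $\T_s(L(p,q)) \neq \emptyset$. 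Since $-L(p,q) \cong L(p,p-q)$ is again a lens space, the same construction applied to $p/(p-q)$ gives $\T_s(-L(p,q)) \neq \emptyset$. (For $p = 1$ we have $L(1,q) = S^3 = \partial B^4$, which realizes the trivial negative definite form on both sides.)

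Granting this hypothesis, Theorem \ref{main thm} furnishes rational homology $3$-spheres $\{Y_k\}_{k=1}^{\infty}$ such that: each $Y_k$ is homology cobordant to $L(p,q)$ by condition (1); each $Y_k$ is irreducible by condition (5); $Y_k$ is diffeomorphic to neither $Y_{k'}$ nor $-Y_{k'}$ when $k \neq k'$ by condition (4), so the family is genuinely infinite; and $\T_s(Y_k) = \emptyset = \T_s(-Y_k)$ by condition (3).

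It then remains to rule out that any $Y_k$ is obtained by a knot surgery, i.e.\ by Dehn surgery on a knot in $S^3$. As recalled in the introduction, it is known (see \cite{OwSt12}) that if a rational homology $3$-sphere $Y$ is obtained by such a surgery, then $\T_s(Y) \neq \emptyset$ or $\T_s(-Y) \neq \emptyset$; roughly, for a suitable continued fraction expansion of the surgery coefficient one presents $Y$ or $-Y$ as the boundary of the simply connected $4$-manifold built by attaching $2$-handles to $B^4$ along a chain of framed knots, whose intersection form is a definite linking matrix. Comparing with condition (3) above, no $Y_k$ is obtained by a knot surgery. Together with the properties in the previous paragraph, this proves Corollary \ref{cor}.

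Since Theorem \ref{main thm} carries essentially all of the weight, I do not expect a genuine obstacle in this corollary: the only substantive points are the verification of the hypothesis of Theorem \ref{main thm} for $L(p,q)$ and $-L(p,q)$ (standard, via linear plumbings) and a careful reading of \cite{OwSt12} to confirm that the statement ``obtained by a knot surgery implies one-sided simply connected definite bounding'' holds for \emph{all} surgery coefficients, not merely integral ones.
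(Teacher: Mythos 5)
Your proposal is correct and follows essentially the same route as the paper: the paper derives the corollary directly from Theorem \ref{main thm} applied to $Y=L(p,q)$ together with the observation (attributed to \cite{OwSt12}) that a knot-surgery rational homology sphere has $\T_s(Y)\neq\emptyset$ or $\T_s(-Y)\neq\emptyset$. Your explicit verification of the hypothesis $\T_s(L(p,q))\neq\emptyset$ and $\T_s(-L(p,q))\neq\emptyset$ via negative definite linear plumbings is exactly the standard fact the paper leaves implicit.
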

These are the first examples of irreducible rational homology 3-spheres which have non-trivial torsion first homology and are not obtained by a knot surgery.
Since infinitely many irreducible 3-manifolds with $H_1(Y) \cong \z$ which are not obtained by a knot surgery are given in \cite{HKMP18},  now we have infinitely many irreducible 3-manifolds
with $H_1(Y) \cong \z/p\z$ which are not obtained by a knot surgery for any integer $p$.

Finally we discuss some questions related to our results on knot surgery.
We first mention that it remains open whether the examples given in the proof of Corollary \ref{cor} have weight one fundamental group.
 \begin{ques}
 Do the examples given in the proof of Corollary \ref{cor} have
weight one fundamental group?
 \end{ques}
Next, while we have a huge number of irreducible rational homology 3-spheres which are not obtained by a knot surgery, all of our examples are toroidal.
Recently, Hom and Lidman \cite{HL18} provided infinitely many hyperbolic integral homology 3-spheres which are not obtained by surgery on a knot, while
the following question is still open.
 \begin{ques}
 Does there exist a hyperbolic rational homology $3$-sphere $Y$ 
such that $H_1(Y;\z)\neq 0$ and $Y$ is not obtained by a knot surgery?
 \end{ques}
In addition, our examples in Corollary \ref{cor} are homology cobordant to 
$L(p,q)$, and hence their $d$-invariants (defined in \cite{OzSz03}) satisfy 
$$
\{ d(Y,\frak{s}) \mid \frak{s} \in \Spin^c(Y) \} 
= \{ d(L(p,q), \frak{s}) \mid \frak{s} \in \Spin^c(L(p,q))\}
$$ for some $p, q$.
So we suggest the following question.

 \begin{ques}
 Does there exist a rational homology $3$-sphere $Y$ 
such that $
\{ d(Y,\frak{s}) \mid \frak{s} \in \Spin^c(Y) \} 
\neq \{ d(L(p,q), \frak{s}) \mid \frak{s} \in \Spin^c(L(p,q))\}
$ for any $p, q$ and $Y$ is not obtained by a knot surgery?
 \end{ques}

\begin{acknowledge}
The first author was supported by JSPS KAKENHI Grant Number 18J00808.
The second author was supported by JSPS KAKENHI Grant Number 17J04364.
\end{acknowledge}


\section{Proof of Proposition \ref{main prop}} 
In this section, we prove Proposition \ref{main prop}.
\setcounter{section}{1}
\setcounter{thm}{1}
\begin{prop}
For any rational homology 3-spheres $Y$ and $M$, 
there exist a rational homology 3-sphere $Y_M$ and a homology cobordism $W_M$
from $Y\#M\#(-M)$ to $Y_M$ 
which satisfy
\begin{enumerate} 
\item $i_*: \pi_1(Y_M) \to \pi_1(W_M)$ is surjective,
\item $i_*: \pi_1(Y\#M\#(-M)) \to \pi_1(W_M)$ is bijective, and
\item $Y_M$ is irreducible and toroidal,
\end{enumerate}
where $i_*$ denotes the induced homomorphism from the inclusion.
\end{prop}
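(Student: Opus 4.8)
The plan is to build $Y_M$ by a controlled surgery (Dehn filling) construction, following Auckly's approach in \cite{Au} but keeping careful track of the fundamental group. First I would start with the manifold $Y \# M \# (-M)$ and take the product cobordism, then attach a single $2$-handle (and the compensating $3$-handle) along a carefully chosen nullhomologous knot $K$ in $Y \# M \# (-M)$ so that the resulting top boundary $Y_M$ differs from $Y\#M\#(-M)$ by a surgery with framing $\pm 1$. Nullhomologous framing $\pm 1$ surgery keeps $Y_M$ a rational homology sphere and, crucially, keeps $H_1$ unchanged, so with the complementary $3$-handle the trace becomes a homology cobordism. To make condition (2) hold one wants $K$ to be nullhomotopic; then the attaching map of the $2$-handle kills nothing in $\pi_1$ and the $3$-handle (attached along a $2$-sphere) also does nothing, so both inclusions $Y\#M\#(-M)\hookrightarrow W_M$ and $Y_M\hookrightarrow W_M$ induce $\pi_1$-isomorphisms; in particular (1) holds as well. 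The point of Auckly's construction is that one can do such a $\pm1$-surgery along a nullhomotopic knot and still arrange the resulting $3$-manifold to be irreducible and toroidal — this is where a specific knot (a suitable satellite or a knot lying in an incompressible torus, e.g. inside a Borromean-type piece glued in) must be chosen.

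The concrete choice I would make: fix an incompressibly embedded once-punctured torus or a knot $K$ that sits inside a $3$-manifold piece which is itself toroidal, connect-summed or glued into $Y\#M\#(-M)$ in a way that does not change the $3$-manifold (so that $K$ is genuinely nullhomotopic in the original manifold after the piece is absorbed), and then do $\pm1$-surgery. The model to imitate is the Bing double / Whitehead double construction from \cite{Au}: take a knot $J\subset Y\#M\#(-M)$ and replace a tubular neighborhood by the exterior of a pattern knot in a way that produces an essential torus, then surger. I would verify irreducibility and toroidality of $Y_M$ by the standard sutured-manifold or JSJ argument: the surgery torus (boundary of the neighborhood of $K$) survives as an incompressible torus in $Y_M$ because the pattern is chosen so that the core of the filling solid torus is not isotopic across it, and no essential sphere can appear since $\pm1$-surgery on a nullhomologous knot in an irreducible manifold along a knot whose exterior is irreducible and boundary-irreducible stays irreducible (Gabai/Scharlemann-type results on reducible surgeries).

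The main obstacle — and the step requiring the most care — is simultaneously achieving (3) (irreducible and toroidal) and (2)/(1) (the nullhomotopy condition and the $\pi_1$-isomorphisms). These pull in opposite directions: a knot essential enough to force an incompressible torus to persist after surgery is typically not nullhomotopic, while a nullhomotopic knot tends to give back something simple after $\pm1$-surgery. Auckly's insight, which I would adapt, is that one can glue in a homologically and homotopically trivial (but geometrically nontrivial) "knotting region'' — essentially a homology-ball-like piece carrying a knot that is nullhomotopic in the ambient manifold yet whose complement is large — so that surgery on it both preserves $\pi_1$ and introduces an essential JSJ torus. Checking that the glued region can be chosen independently of $Y$ and $M$ (so the construction is uniform, as the statement demands "for any $Y$ and $M$'') is the technical heart; I expect this to occupy the bulk of the section, with the homology-cobordism and $\pi_1$ bookkeeping being routine handle-calculus once the region is in place.
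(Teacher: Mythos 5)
Your proposal correctly identifies Auckly's construction as the model, but the concrete mechanism you describe does not work, and it differs from the paper's in ways that break the argument.

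\textbf{The claimed cobordism does not exist.} You propose building $W_M$ by attaching a $2$-handle with framing $\pm 1$ along a nullhomologous knot $K$ in $Y\#M\#(-M)$, together with a ``compensating $3$-handle.'' But $\pm1$-surgery on a nullhomologous knot in a rational homology sphere produces another rational homology sphere; the new boundary has no $S^2$-summand, so there is no sphere along which to attach a $3$-handle. Without it, the surgery trace has $b_2 = 1$ relative to $Y\#M\#(-M)$ and therefore is not a homology cobordism. Even granting such a decomposition, the dual of a $2$-handle $+$ $3$-handle from the $Y\#M\#(-M)$ side is a $1$-handle $+$ $2$-handle from the $Y_M$ side, so $\pi_1(Y_M)\to\pi_1(W_M)$ need not be surjective; condition (1) does not come along ``in particular'' as you claim.

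\textbf{$Y_M$ is not a surgery on $K$.} The paper's $Y_M$ is built by excising a neighborhood of a null-homologous knot $K$ whose complement is hyperbolic (such $K$ exists by \cite{Tsu03}) and gluing in Auckly's $3$-manifold $C$, which has \emph{incompressible} torus boundary — in particular $C$ is not a solid torus, so this is a torus replacement, not a Dehn filling. Correspondingly, the cobordism $W_M$ is given by a relative Kirby diagram consisting of one $1$-handle and one $2$-handle (from the $Y\#M\#(-M)$ side). This does give a homology cobordism, and its dual decomposition from the $Y_M$ side is a $2$-handle $+$ $3$-handle, which is exactly what yields surjectivity of $\pi_1(Y_M)\to\pi_1(W_M)$.

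\textbf{Nullhomotopy is neither used nor needed.} The tension you describe — a nullhomotopic knot versus an essential surviving torus — is a real obstacle for the surgery approach, but the paper sidesteps it entirely. The knot $K$ is \emph{not} required to be nullhomotopic; bijectivity of $\pi_1(Y\#M\#(-M))\to\pi_1(W_M)$ is proved by a presentation argument: the $2$-handle's attaching circle runs over the $1$-handle exactly once, giving $\pi_1(W_M) = \langle S\cup\{x\}\mid R\cup\{xw\}\rangle$ with $w$ a word in the old generators, which is isomorphic to $\langle S\mid R\rangle$ by sending $x\mapsto w^{-1}$. Irreducibility and toroidality of $Y_M=C\cup_{T^2}E_K$ are then shown by verifying that $\partial C$ is incompressible in $C$ (from Auckly) and in $E_K$ (since $E_K$ is hyperbolic), then invoking the Loop Theorem and injectivity of factor inclusions into an amalgamated free product; your appeal to Gabai/Scharlemann-type results on reducible surgeries is not needed and would be the wrong tool here. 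In short, the sketch has the right inspiration but gets the handle structure and the gluing wrong, and the $\pi_1$ and irreducibility claims are not substantiated.
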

\setcounter{section}{2}
\setcounter{thm}{0}

\def\proofname{Proof of Proposition \ref{main prop}}
\begin{proof}
\def\proofname{Proof}
We first describe the construction of $Y_M$ in the proposition.
Let $Y$ and $M$ be rational homology 3-spheres and $Y_M':= Y \# M \# (-M)$.
By the result of \cite{Tsu03}, there exists a null-homologous knot $K$ in $Y'_M$ whose complement $Y'_M\setminus K$ has a hyperbolic structure. We denote the exterior of $K$ in $Y'_M$ by $E_K$.
Let $C$ be a 3-manifold with torus boundary shown in Figure \ref{C}, where $\mu$ and
$\lambda$ in the figure are simple closed curves in the boundary of $C$.
 Define $Y_M$ as $C \cup_{\text{torus}} E_K$ by identification
\[
\begin{array}{lll}
\mu & \mapsto & \text{meridian} \\ 
\lambda & \mapsto & \text{preferred longitude}.
\end{array}
\]
It is easy to see that $Y_M$ is a rational homology 3-sphere.
\begin{figure}[htbp]
\begin{center}
\includegraphics[scale= 1.2]{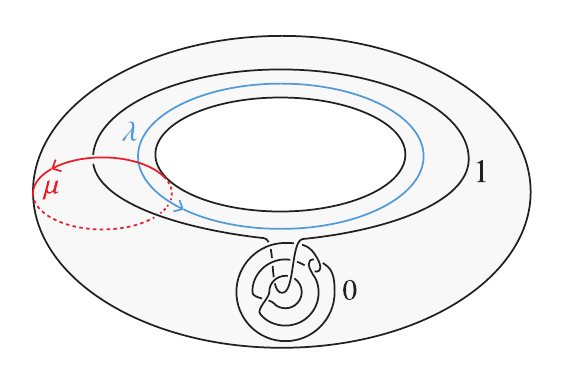}
\caption{\label{C} A 3-manifold $C$}
\end{center}
\end{figure}

\begin{claim}
\label{irr}
$Y_M$ is irreducible and toroidal.
\end{claim}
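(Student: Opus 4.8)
The plan is to exhibit the gluing torus $T := \partial C = \partial E_K$ as an incompressible torus in $Y_M$. Once this is established, toroidality is immediate, and irreducibility follows from the standard principle that a union of two irreducible $3$-manifolds along an incompressible boundary torus is again irreducible.

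I would first record the two facts about the pieces that make this work. Since $Y'_M\setminus K$ is hyperbolic, the exterior $E_K$ is aspherical (its universal cover is $\mathbb{H}^3$), hence $\pi_2(E_K)=0$, hence $E_K$ is irreducible; and $\partial E_K$ is incompressible in $E_K$, being the boundary torus of a hyperbolic knot exterior. For the other piece I would read off from Figure~\ref{C} that $C$ is irreducible and that $\partial C$ is incompressible in $C$ — equivalently, $\pi_1(\partial C)\to\pi_1(C)$ is injective and no slope on $\partial C$ bounds a disk in $C$. This is the only point at which the explicit picture of $C$ is used, and it is a routine check for the manifold drawn there; I expect this verification to be the main (and essentially the only nonroutine) obstacle in the argument.

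Granting these, incompressibility of $T$ in $Y_M$ is clean. A two-sided torus is incompressible exactly when it is $\pi_1$-injective, so $\pi_1(T)$ injects into $\pi_1(C)$ and into $\pi_1(E_K)$; by van Kampen $\pi_1(Y_M)=\pi_1(C)*_{\pi_1(T)}\pi_1(E_K)$, an amalgamated free product into which $\pi_1(T)$ embeds. Thus $T$ is $\pi_1$-injective, i.e.\ incompressible, in $Y_M$, and $Y_M$ is toroidal. For irreducibility I would run the usual innermost-disk argument: given an embedded $2$-sphere $S\subset Y_M$ made transverse to $T$, an innermost circle of $S\cap T$ bounds a disk on $S$ and, by incompressibility of $T$, also a disk on $T$; the resulting $2$-sphere lies in one of the two irreducible pieces, hence bounds a ball, and pushing $S$ across this ball reduces $|S\cap T|$. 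Iterating, $S$ becomes disjoint from $T$ and so lies entirely in $C$ or in $E_K$, where it bounds a ball. Hence $Y_M$ is irreducible, and the proof of the claim reduces to the cut-and-paste facts above together with the figure-level input on $C$.
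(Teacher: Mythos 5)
Your proposal follows essentially the same route as the paper: establish that $\partial C=\partial E_K$ is incompressible in each piece (the paper cites Auckly for the properties of $C$ and derives incompressibility of $\partial E_K$ from hyperbolicity exactly as you do), deduce $\pi_1$-injectivity of the torus in $Y_M$ via the amalgamated free product, and conclude irreducibility from the standard fact that a union of irreducible pieces along an incompressible surface is irreducible (which you prove by innermost disks and the paper quotes as a lemma from Auckly). The argument is correct and matches the paper's proof.
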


\begin{proof}
We make a similar argument to \cite{Au}.
More precisely, we use the following lemmas.

\begin{lem}[See p.13 in \cite{Au}]
\label{au lem}
Let $N$ be a 3-manifold and $F$ a properly embedded incompressible  surface in $N$.
If every component of $N-F$ is irreducible, then $N$ is also irreducible.
\end{lem}

\begin{lem}[Papakyriakopoulos' Loop Theorem]
\label{papa}
If the boundary of a 3-manifold $N$ is incompressible in $N$, then $i_*: \pi_1(\partial N) \to \pi_1(N)$ is injective.
\end{lem} 

\begin{lem}[See Theorem 2.6 in \cite{LS}, for example]
\label{ls lem}
Let
$$
\begin{CD}
C @>i_{A}>> A \\
@Vi_{B}VV @VVj_AV \\
B @>>j_{B}> A*_{C}B 
\end{CD}
$$
be the defining diagram of $A*_C B$. If $i_A$ and $i_B$ are injective, then
$j_A$ and $j_B$ are injective.
\end{lem}
By Lemma \ref{au lem}, it suffices to prove
\begin{enumerate}
\item $\partial C$ is incompressible in $Y_M$, and

\item both $C$ and $E_K$ are irreducible
\end{enumerate}
for proving Claim \ref{irr}. (Note that the first condition implies that $Y_M$ is a toroidal 3-manifold with essential torus $\partial C$.) Moreover, Lemma \ref{papa} and Lemma \ref{ls lem} implies that $\partial C$  is incompressible in $Y_M$ if $\partial C$ is incompressible both in $C$ and in $E_K$. To prove it, suppose that $\partial C$ is incompressible both in $C$ and in $E_K$.
Then it follows from Lemma \ref{papa} that both of the induced homomorphisms  $(i_C)_*: \pi_1(\partial C) \to \pi_1(C)$
and $(i_{E_K})_*: \pi_1(\partial C) \to \pi_1(E_K)$ are injective. In addition, Since $\pi_1(Y_M)=\pi_1(C)*_{\pi_1(\partial C)} \pi_1(E_K)$, Lemma \ref{ls lem} implies that
both of the induced homomorphisms $(j_C)_*: \pi_1(C) \to \pi_1(Y_M)$
and $(j_{E_K})_*: \pi_1(E_K) \to \pi_1(Y_M)$ are injective.
Now, assume that there exists a compressing disk for $\partial C$ in $Y_M$, and then $i_*: \pi_1(\partial C) \to \pi_1(Y_M)$ is not injective. However, since $i_*=(j_C)_* \circ (i_C)_*$ and  the right hand side is injective, it leads to a contradiction.

Here, we note that the 3-manifold $C$ is exactly the same as the manifold $C$ appearing  in \cite{Au},  and it is proved that $\partial C$ is  incompressible in $C$, and $C$ is irreducible.
Now let us prove that $\partial C = \partial E_K$ is incompressible in $E_K$, and $E_K$ is irreducible. The irreducibility of $E_K$ immediately follows from the fact that $E_K$ has a hyperbolic structure. 
Assume that there exists a compressing disk $D$ for $\partial E_K$ in $E_K$.
Then it follows from elementary arguments that $\partial D$ is a preferred longitude for $K$,
and hence $K$ bounds a disk in $Y'_M$. This implies that $E_K$ is homeomorphic to $Y'_{M} \# (S^1 \times D^2)$, and $E_K$ does not have any hyperbolic structure. This leads to a contradiction, and hence $\partial E_K$ is incompressible in $E_K$.
\end{proof}

Next, let $W_M$ denote a cobordism described by the relative Kirby diagram shown in Figure~\ref{W_M}. Here, the tangle diagram $\langle D \rangle$ in Figure \ref{W_M} is obtained as follows. 
We first take a diagram $D'$ of $K$ in $Y'_{M}$ (i.e. a knot diagram of $K$ in a surgery diagram of $Y'_M$) such that the linking number between $K$ and each component of the surgery link for $Y'_M$ is zero. 
Next, we derive a tangle diagram $D$ from $D'$ by removing a small disk whose intersection with $K$ is a small arc. Finally, by putting brackets around each surgery coefficient in $D$, we have the diagram $\langle D \rangle$.
Then, it follows from elementary handle theory that $W_M$ is a homology cobordism from $Y'_M$ to $Y_M$, and it admits a handle decomposition consisting of a single 1-handle $h^1$ and single 2-handle $h^2$.

\begin{figure}[htbp]
\begin{center}
\includegraphics[scale= 1.2]{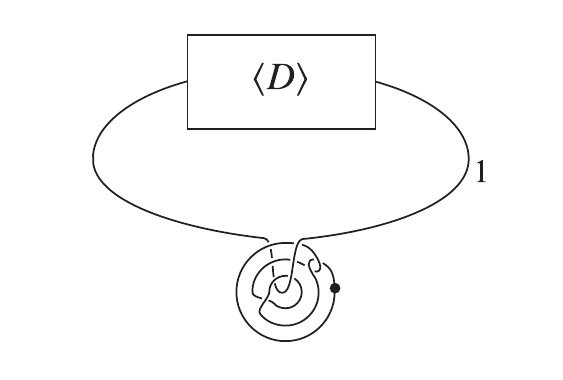}
\caption{\label{W_M} A cobordism $W_M$}
\end{center}
\end{figure}

\begin{claim}
\label{surj}
$i_*: \pi_1(Y_M) \to \pi_1(W_M)$ is surjective.
\end{claim}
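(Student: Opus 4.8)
The strategy is to turn the handle decomposition of $W_M$ upside down and read off $\pi_1(W_M)$ from the $Y_M$ end. Recall that $W_M$ is a $4$-dimensional cobordism from $Y'_M$ to $Y_M$ obtained from $Y'_M \times [0,1]$ by attaching the single $1$-handle $h^1$ and then the single $2$-handle $h^2$. Passing to the dual handle decomposition — i.e.\ building $W_M$ from the $Y_M$ end by reversing the associated Morse function — an index-$k$ handle becomes an index-$(4-k)$ handle and the order of attachment reverses. Hence $W_M$ is diffeomorphic to the manifold obtained from $Y_M \times [0,1]$ by first attaching one $2$-handle (the dual of $h^2$) along some framed knot $\gamma \subset Y_M \times \{1\}$ and then attaching one $3$-handle (the dual of $h^1$).

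Next I would compute the effect on fundamental groups along this dual decomposition. We have $\pi_1(Y_M \times [0,1]) \cong \pi_1(Y_M)$, the inclusion of $Y_M = Y_M\times\{1\}$ inducing an isomorphism. Attaching the $2$-handle along $\gamma$ has the homotopy-theoretic effect of coning off the loop $\gamma$, so it quotients $\pi_1$ by the normal closure $\langle\langle[\gamma]\rangle\rangle$ of the class of $\gamma$ (the framing is irrelevant here), and the inclusion of $Y_M$ into the resulting manifold induces the corresponding quotient homomorphism, which is surjective. Finally, the $3$-handle is attached along $S^2\times D^1$; since its attaching sphere is $S^2$, up to homotopy this amounts to coning off a $2$-sphere, which leaves $\pi_1$ unchanged. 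Therefore $\pi_1(W_M)\cong \pi_1(Y_M)/\langle\langle[\gamma]\rangle\rangle$, and under this identification $i_*:\pi_1(Y_M)\to\pi_1(W_M)$ is the quotient projection, hence surjective. This proves Claim \ref{surj}.

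There is no real obstacle here: surjectivity of $i_*$ from the outgoing end of a cobordism is automatic whenever the cobordism admits a handle decomposition, relative to its incoming end, using only handles of index at most $2$ — dually one then attaches from the outgoing end only handles of index at least $2$, and such handles can only add relations to $\pi_1$, never enlarge it. The only point requiring attention is that the relevant handle decomposition really consists of exactly one $1$-handle and one $2$-handle relative to $Y'_M\times[0,1]$, which is precisely what was recorded in the construction of Figure \ref{W_M}. (The sharper statement that $\pi_1(Y'_M)\to\pi_1(W_M)$ is moreover injective, hence bijective, is the content of the next claim and is not needed for the present one.)
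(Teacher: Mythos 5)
Your proof is correct and takes the same approach as the paper: pass to the dual handle decomposition, observe that $W_M$ is then built from $Y_M\times[0,1]$ by attaching one $2$-handle and one $3$-handle, and conclude surjectivity of $i_*$ since such handles can only kill, never create, elements of $\pi_1$. The paper states this in two sentences; your version simply supplies the routine details.
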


\begin{proof}
By considering the dual decomposition, we have a handle decomposition of $W_M$ consisting a single 2-handle and single 3-handle. This implies that $i_*: \pi_1(Y_M) \to \pi_1(W_M)$ is surjective.
\end{proof}

\begin{claim}
\label{bij}
$i_*: \pi_1(Y'_{M}) \to \pi_1(W_M)$ is bijective.
\end{claim}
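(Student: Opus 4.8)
The plan is to analyze the fundamental group $\pi_1(W_M)$ directly from the handle decomposition of $W_M$ with one $1$-handle $h^1$ and one $2$-handle $h^2$ attached to $Y'_M \times [0,1]$, and to show that both the effect of $h^1$ and the relation introduced by $h^2$ cancel each other at the level of $\pi_1$. First I would use the fact, just established in Claim \ref{surj} via the dual decomposition, that $i_*\colon \pi_1(Y_M)\to \pi_1(W_M)$ is surjective; combined with the decomposition by $h^1$ and $h^2$ seen from the $Y'_M$ side, we get a presentation
\[
\pi_1(W_M) \cong \bigl(\pi_1(Y'_M) * \z\bigr)/\langle\langle r \rangle\rangle,
\]
where the free generator $t$ comes from $h^1$ and $r$ is the single relator read off from the attaching curve of $h^2$. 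So $\pi_1(W_M)$ is obtained from $\pi_1(Y'_M)$ by adding one generator and one relation.

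Next I would read off the relator $r$ explicitly from the relative Kirby diagram in Figure~\ref{W_M}. By construction the attaching circle of $h^2$ runs over $h^1$ geometrically in a controlled way: because the tangle $\langle D\rangle$ was obtained from a diagram $D'$ of the knot $K$ in which the linking number of $K$ with every surgery-link component of $Y'_M$ is zero, the word $r$ has the form $t\,w\,t^{-1}\cdot v^{-1}$ (or $t = w'$ after rearranging), where $w, w', v$ are specific words in $\pi_1(Y'_M)$; the zero-linking-number condition is exactly what guarantees that the curve closes up homologically so that $W_M$ is a homology cobordism, and it forces the net exponent of $t$ in $r$ to be $\pm 1$. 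Consequently the relation $r=1$ lets us solve for $t$ as a word in $\pi_1(Y'_M)$, so the new generator is redundant and $\pi_1(W_M)$ is a quotient of $\pi_1(Y'_M)$ by the residual relation. The key point is then to check that this residual relation is trivial — i.e. that after eliminating $t$ nothing is imposed on $\pi_1(Y'_M)$ — which again follows from the structure of $\langle D\rangle$: the $2$-handle is attached along (a push-off of) $K$ itself, and $K$ is null-homologous with the curves arranged so that the only new relation is the definition of $t$. Hence $i_*\colon \pi_1(Y'_M)\to\pi_1(W_M)$ is surjective.

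For injectivity I would argue that $i_*$ cannot collapse anything: combining surjectivity of $i_*$ from the $Y'_M$ side with the surjectivity of $i_*$ from the $Y_M$ side (Claim \ref{surj}), and using that $W_M$ is a homology cobordism (so $H_1(W_M;\z)\cong H_1(Y'_M;\z)$ is finite of the same order as $H_1(Y_M;\z)$), one sees that the composite
\[
\pi_1(Y'_M) \twoheadrightarrow \pi_1(W_M)
\]
is an isomorphism provided the presentation above is ``efficient'', i.e. the single $2$-handle relation is used up entirely in killing the single $1$-handle generator $t$ and imposes no further relation. The cleanest way to make this rigorous is the Kirby-calculus observation that sliding $h^2$ over (the dual of) $h^1$ and then cancelling the $h^1$--$h^2$ pair transforms the relative diagram for $W_M$ back into the trivial relative diagram for $Y'_M\times[0,1]$ with the extra framed curve $\langle D\rangle$ now carrying the original surgery data, so that $W_M$ is literally diffeomorphic rel $\partial_- = Y'_M$ to a product-like cobordism whose $\pi_1$ is $\pi_1(Y'_M)$; then $i_*$ on the $Y'_M$ end is visibly a bijection.

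The main obstacle I expect is the second step: verifying precisely, from Figure~\ref{W_M}, that the word $r$ associated to $h^2$ has net $t$-exponent exactly $\pm1$ and that after eliminating $t$ the relator becomes trivial rather than some genuine relation in $\pi_1(Y'_M)$. This is exactly the place where the hypothesis on $D'$ (linking number zero with each surgery-link component, and the disk removed to form the tangle meeting $K$ in a single arc) must be used in full, and it is the heart of Auckly's original construction; everything else (Claim \ref{surj}, the homology-cobordism statement, and the deduction of bijectivity from two-sided surjectivity plus homology data) is formal handle theory.
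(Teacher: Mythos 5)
The overall plan coincides with the paper's: read off a presentation of $\pi_1(W_M)$ from the $1$-handle/$2$-handle decomposition over $Y'_M\times[0,1]$ and eliminate the $1$-handle generator. But the crucial step is left as an acknowledged gap, and the way you frame it shows a conflation that would have to be cleared up before the argument could be completed.

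Specifically: the zero--linking-number condition and the homology-cobordism property only tell you that the relator $r$ has \emph{net $t$-exponent} $\pm 1$, i.e.\ that $r$ maps to $\pm t$ in $H_1$. That is strictly weaker than what you need, which is that $t$ appears \emph{exactly once} in (the reduced form of) $r$. If $t$ appears several times with net exponent $\pm 1$ (e.g.\ $r = t\,w_1\,t\,w_2\,t^{-1}$), you cannot solve for $t$ by a Tietze move, there is no clean elimination, and $i_*\colon\pi_1(Y'_M)\to\pi_1(W_M)$ need be neither injective nor surjective. The paper gets the stronger statement directly from Figure~\ref{loop}: it shows that $\partial h^2$ is \emph{homotopic} in $(Y'_M\times[0,1])\cup h^1$ to the explicit loop $l$, whose homotopy class is visibly a word of the form $xw$ with $x$ occurring once; this is a geometric observation about the specific diagram in Figure~\ref{W_M}, and it is exactly the input you identify as missing. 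Once $r=xw$ is established, the paper finishes with the explicit inverse presentation map $g$ (sending $x\mapsto w^{-1}$), with no leftover ``residual relation'' to worry about.

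Your proposed ``cleanest'' route --- slide $h^2$ over $h^1$ and cancel the pair, so that $W_M$ is ``literally diffeomorphic rel $\partial_-=Y'_M$ to a product-like cobordism'' --- is incorrect. Geometric $1$-/$2$-handle cancellation requires the attaching circle of $h^2$ to meet the belt $2$-sphere of $h^1$ in a single point up to isotopy in the intermediate $3$-manifold, not just once algebraically (or once up to homotopy). In this construction the attaching circle is deliberately entangled so that this fails: if the handles cancelled geometrically, we would have $W_M\cong Y'_M\times[0,1]$ and hence $Y_M\cong Y'_M$, which would defeat the whole construction (for instance $Y_M$ is supposed to be toroidal with essential torus $\partial C$, while $Y'_M$ need not be). The cancellation happens only at the level of $\pi_1$, which is precisely the content of Claim~\ref{bij} and cannot be deduced from a nonexistent geometric cancellation.
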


\begin{proof}
Let $\langle S \mid R \rangle$ be a presentation for $\pi_1(Y'_{M})=\pi_1(Y'_{M}\times [0,1])$
and $l$ a loop shown in Figure~\ref{loop}.
Then $\pi_1((Y'_{M}\times [0,1]) \cup h^1)$ is presented by $\langle S \cup \{ x \} \mid R \rangle$, where $x$ corresponds to $h^1$, and $\partial h^2$ is homotopic to $l$ in $(Y'_{M}\times [0,1]) \cup h^1$. This implies that the homotopy class of $\partial h^2$ is a word of the form $xw$, where $w$ is a word on $S$, and hence 
$\pi_1(W_M) = \pi_1((Y'_{M}\times [0,1]) \cup h^1 \cup h^2)$ is represented by
$\langle S \cup \{ x \} \mid R \cup \{ xw \} \rangle$. Moreover,  the diagram
$$
\begin{CD}
\langle S \mid R \rangle @> f >> \langle S \cup \{ x \} \mid R \cup \{ xw \} \rangle\\
@ V \cong VV @VV \cong V \\
\pi_1(Y'_{M}) @>>i_*> \pi_1(W_M)
\end{CD}
$$
is commutative, where $f$ maps $y\in S$ to $y$.
We construct the inverse of $f$. Define a map $g: \langle S \cup \{ x \} \mid R \cup \{ xw \} \rangle \to \langle S \mid R \rangle $ by
$$
y \mapsto \left\{
\begin{array}{llll}
y & (y \in S) \\ 
w^{-1}& (y=x) .
\end{array}
\right.
$$
Then it 
is
easy to see that $g$ is well-defined
and both $f \circ g$ and $g \circ f$ are the identity maps. (Note that $w^{-1}=x$ in $\langle S \cup \{ x \} \mid R \cup \{ xw \} \rangle.$)
This completes the proof.
\end{proof}

\begin{figure}[htbp]
\begin{center}
\includegraphics[scale= 1.2]{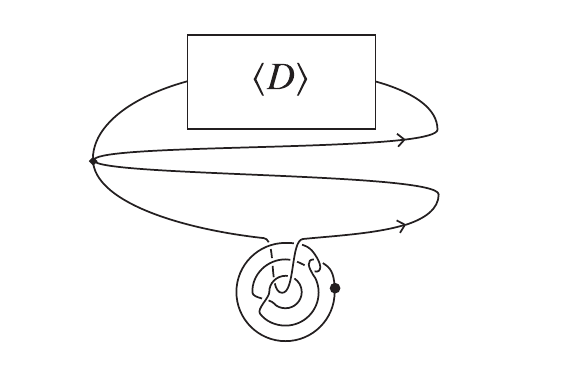}
\caption{\label{loop} A loop $l$ in 
$(Y'_M
\times [0,1]) \cup h^1$}
\end{center}
\end{figure}

The above arguments complete the proof of Proposition \ref{main prop}.
\end{proof}

\def\proofname{Proof}


\section{Definite bounding and Taubes's theorem}

In this section, 
we prove the following 
proposition 
by using a theorem of Taubes. 

\begin{prop}\label{definite}
Let $Y$ and $M$ be rational homology 3-spheres, and $Y_M$ a rational homology 3-sphere given by Proposition~\ref{main prop}.
If $\T_s(Y) \neq \emptyset$, $\T_s (-Y) \neq \emptyset$ and $|\T_s(M) |>1$, 
then we have $\T_s(Y_M)= \T_s (-Y_M) =\emptyset$.
\end{prop}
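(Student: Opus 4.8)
The plan is to argue by contradiction, supposing say $\T_s(Y_M) \neq \emptyset$, so that $Y_M$ bounds a simply connected negative definite $4$-manifold $X$. The idea is to use the homology cobordism $W_M$ of Proposition~\ref{main prop} to transfer this bounding back to $Y'_M = Y\#M\#(-M)$, produce a \emph{simply connected} negative definite filling of $Y'_M$, and then decompose along the connected-sum spheres to contradict the hypothesis $|\T_s(M)|>1$ together with $\T_s(Y),\T_s(-Y)\neq\emptyset$. Here Taubes's theorem (Theorem~\ref{taubes}) enters exactly to rule out the pathological possibility that could otherwise arise, namely that gluing produces a manifold with a non-standard definite form or a periodic-end obstruction; I would invoke it in the form that a negative definite $4$-manifold with the homology of a punctured $\co P^2$-connected-sum end forces diagonalizability and constrains the way definite forms can be glued.

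First I would glue $X$ to $W_M$ along $Y_M$: since $W_M$ is a homology cobordism, $Z := W_M \cup_{Y_M} X$ is a negative definite $4$-manifold with $\partial Z = Y'_M$ and $H_*(Z)\cong H_*(X)$. The subtlety is that $Z$ need not be simply connected — $\pi_1(W_M)$ is only a quotient of $\pi_1(Y_M)$ receiving $\pi_1(Y'_M)$ bijectively. Here is where I use condition (2) of Proposition~\ref{main prop}: the inclusion $Y'_M \hookrightarrow W_M$ induces an isomorphism on $\pi_1$, and since $\pi_1(Y'_M)=\pi_1(Y)*\pi_1(M)*\pi_1(-M)$, one can surger away $\pi_1(Z)$ by attaching $2$-handles, or rather observe that the relevant stable-equivalence class of the definite form is unchanged; the point is to land in $\T_s(-(\text{something}))$ rather than merely $\T(-(\text{something}))$. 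More precisely, I would combine the $Y$-side hypothesis $\T_s(-Y)\neq\emptyset$ to cap off the $Y$ summand with a simply connected definite piece of the \emph{opposite} sign and kill that part of $\pi_1$, and similarly handle one of the $\pm M$ summands.

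The heart of the argument is then the connected-sum decomposition. After capping, one is left with a closed or almost-closed negative definite $4$-manifold built from simply connected definite fillings of $M$, $-M$, and $Y$, $-Y$; by Donaldson's theorem (the closed case) or Taubes's theorem (the end-periodic / punctured case) all intersection forms involved must be diagonal, hence each of $\T_s(M)$ and $\T_s(-M)$ contributes only the standard diagonal form and the constraint forces $|\T_s(M)| \le 1$, contradicting $|\T_s(M)|>1$. Running the same argument with $Y_M$ replaced by $-Y_M$ (using $\T_s(Y)\neq\emptyset$ in place of $\T_s(-Y)\neq\emptyset$, and the fact that $-W_M$ is a homology cobordism from $-Y'_M$ to $-Y_M$ with the analogous $\pi_1$ properties, and $-Y'_M = (-Y)\#(-M)\#M$) gives $\T_s(-Y_M)=\emptyset$ as well.

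The main obstacle I anticipate is the bookkeeping around $\pi_1$: ensuring that every filling produced along the way is genuinely simply connected, not just definite, so that the contradiction is with $\T_s$ and not merely $\T$. This is exactly why Proposition~\ref{main prop} was stated with the precise surjectivity/bijectivity conditions (1) and (2), and the delicate step is to use those conditions — rather than the weaker "homology cobordism" statement — to perform the $\pi_1$-killing. A secondary technical point is to make sure Taubes's theorem applies in the correct form to the non-compact or glued-up manifold that appears; I expect the paper sets this up carefully in the cited Theorem~\ref{taubes}.
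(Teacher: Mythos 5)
Your overall strategy (contradiction, glue the filling $X$ of $Y_M$ to the reversed cobordism, cap off, invoke Taubes) is the paper's, but two essential steps are missing or go wrong.

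First, the simple connectivity of $Z = X\cup_{Y_M}(-W_M)$ is not a ``subtlety'' to be repaired afterwards: it is immediate from van Kampen together with condition (1) of Proposition~\ref{main prop}. Since $\pi_1(Y_M)\to\pi_1(W_M)$ is \emph{surjective}, every loop in $W_M$ is homotopic into $Y_M$, hence dies in $\pi_1(X)=1$, so $\pi_1(Z)=\pi_1(X)*_{\pi_1(Y_M)}\pi_1(W_M)=1$. You instead assert that $Z$ ``need not be simply connected'' and propose to surger $\pi_1(Z)$ with $2$-handles or to retreat to stable equivalence of forms; the first operation can destroy negative definiteness (and changes the intersection form), and the second only controls $\T$, not $\T_s$, which is exactly the distinction the proposition is about. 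You also lean on condition (2) (bijectivity of $\pi_1(Y'_M)\to\pi_1(W_M)$), which plays no role here — it is needed later, for the Chern--Simons argument of Section~4.

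Second, the endgame is not a closed-up manifold and a ``connected-sum decomposition.'' The paper attaches two $3$-handles to $Z$ to separate the boundary $Y\#M\#(-M)$ into $Y\amalg M\amalg(-M)$, and caps the $Y$ component with a simply connected \emph{negative definite} filling $U$ of $-Y$ (this is where $\T_s(-Y)\neq\emptyset$ enters; it is not a piece ``of the opposite sign,'' which would ruin definiteness). The result is a simply connected negative definite cobordism $W$ from $M$ to itself. The contradiction then comes from Lemma~\ref{cpt}: take a simply connected negative definite $K$ with $\partial K=M$ and non-diagonalizable form (this exists because $|\T_s(M)|>1$), form the end-periodic manifold $K\cup_M W\cup_M W\cup_M\cdots$, and apply Theorem~\ref{taubes} (the $SU(2)$-representation hypothesis holds because $\pi_1(W)=1$) to conclude the form of $K$ is diagonal — a contradiction. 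Your claim that Donaldson/Taubes forces ``each of $\T_s(M)$ and $\T_s(-M)$'' to be diagonal does not follow from anything you have built; without producing the self-cobordism of $M$ there is no periodic end to feed into Taubes's theorem. The case of $-Y_M$ is then symmetric, using $W_M$ and $\T_s(Y)\neq\emptyset$.
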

First we state Taubes's theorem. This is an end-periodic version of Donaldson's diagonalization theorem.

\begin{thm}\label{taubes} \cite{T87} Let $Y$ be a rational homology 3-sphere
 and $W:= K \cup_Y W_0 \cup_Y W_1 \cup_Y \cdots $
a connected non-compact 4-manifold
satisfying the following conditions. 
\begin{itemize} 
\item $K$ is a 
simply connected
negative definite 4-manifold with $\partial K =Y$.
\item $W_0$ is a negative definite cobordism from $Y$ to itself.
\item $W_i$ are copies of $W_0$ for any $i\in \{1,2, \ldots \}$.
\end{itemize} 
We also assume that there is no non-trivial representation from $\pi_1(W_0)$ to $SU(2)$. 
Then the intersection form of $K$ is diagonalizable.
\end{thm}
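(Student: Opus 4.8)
The plan is to run Donaldson's diagonalization argument \cite{Do83} on the non-compact periodic-end $4$-manifold $W:=K\cup_Y W_0\cup_Y W_1\cup\cdots$, using gauge theory on asymptotically periodic $4$-manifolds in place of the closed-manifold theory. The first observation is that $\pi_1(W)$ admits no nontrivial $SU(2)$-representation: by van Kampen $\pi_1(W)$ is generated by the images of the subgroups $\pi_1(W_i)\cong\pi_1(W_0)$ and of $\pi_1(K)=1$, and every $SU(2)$-representation of $\pi_1(W_0)$ is trivial by hypothesis. In particular the periodic end of $W$ carries only the trivial flat $SU(2)$-connection, and this is what makes the analytic steps below work.

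I would then form the moduli space $\cm(W)$ of irreducible anti-self-dual (ASD) $SU(2)$-connections on $W$ with finite Yang--Mills energy $8\pi^2$ (relative second Chern number $1$), using exponentially weighted Sobolev spaces $L^2_{k,\delta}$ with a small positive weight $\delta$ down the end. On these spaces the linearized ASD operator $d_A^*\oplus d_A^+$ is Fredholm for all but a discrete set of weights, and since $K$ and every $W_0$ are negative definite the end-periodic $b^+$ of $W$ vanishes, so the end-periodic index theorem gives $\dim\cm(W)=8-3(1+b^+)=5$. A generic metric (or a holonomy perturbation supported in the core $K$) makes the irreducible locus a smooth $5$-manifold. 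The reducible ASD connections split as $E=L\oplus L^{-1}$, and finiteness of the energy forces $F_L$ to be an $L^2$ ASD harmonic $2$-form; as the periodic end supports no nonzero such form, $c_1(L)$ is carried by the core and corresponds, exactly as in the closed case, to a class $\alpha\in H^2(K;\z)$ with $\alpha\cdot\alpha=-1$ (minor refinements are needed when $H_1(Y;\z)\neq0$, where torsion classes also enter, but no new idea). Each such pair $\pm\alpha$ gives an isolated reducible near which $\cm(W)$ has a cone singularity with link $\co P^2$; write $N$ for the number of these pairs.

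Next comes the compactness that powers the cobordism argument, and this is where the hypothesis on $\pi_1(W_0)$ is essential. By Uhlenbeck's theorem a sequence in $\cm(W)$ either converges there or bubbles. Because the only finite-energy flat connection on the periodic end is the trivial one, and it is non-degenerate in the relevant Fredholm sense, Taubes's neck-length and Chern--Simons estimates for periodic ends exclude ``translational'' non-compactness: the bubbling point cannot escape to infinity down the end but stays in a bounded region, which deformation retracts onto $K$. Since the energy is minimal there is a single bubble and the background limit is trivial, so the bubbled stratum is a genuine compact boundary component diffeomorphic to $K$, with a collar $K\times(0,\lambda_0)$ in the Uhlenbeck compactification $\overline{\cm}(W)$. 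Taubes's existence theorem (valid since $b^+=0$) makes $\cm(W)$ nonempty, so $\overline{\cm}(W)$ is a nonempty space with exactly the ends just described.

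Removing small cone neighbourhoods of the $N$ reducibles from $\overline{\cm}(W)$ leaves a compact oriented smooth $5$-manifold with boundary $K\sqcup(\co P^2)_1\sqcup\cdots\sqcup(\co P^2)_N$; with Donaldson's orientation conventions the $N$ copies of $\co P^2$ contribute with a single sign, so null-cobordancy of this boundary together with the cobordism invariance of the signature gives $N=b_2(K)$. Finally, an elementary lattice computation closes the argument: split off from $(H^2(K;\z)/\mathrm{torsion},Q_K)$ a maximal orthogonal system of norm-$(-1)$ vectors as an orthogonal summand $\langle-1\rangle^{\oplus k}$; maximality forces the complement to contain no norm-$(-1)$ vector, so the norm-$(-1)$ vectors are precisely $\pm e_1,\dots,\pm e_k$ and hence $N=k$, whence $Q_K\cong\langle-1\rangle^{\oplus b_2(K)}$ is diagonalizable. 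The main obstacle is the analytic infrastructure for the periodic end: the Fredholm theory and end-periodic index formula on weighted Sobolev spaces, transversality compatible with the periodic structure, and, above all, the compactness statement that prevents instanton energy from drifting out along the end — it is exactly there that the absence of nontrivial $SU(2)$-representations of $\pi_1(W_0)$ is used, and this is the technical heart of \cite{T87}.
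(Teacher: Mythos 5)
The paper does not prove this statement: Theorem 3.2 is quoted verbatim from Taubes's paper \cite{T87} and used as a black box, so there is no in-paper argument to compare yours against. Judged on its own terms, your outline is a fair summary of the Donaldson--Taubes strategy: set up the charge-one ASD moduli space on the end-periodic manifold $W$, use the hypothesis that $\pi_1(W_0)$ has no non-trivial $SU(2)$-representations to guarantee that the only finite-energy flat connection on the periodic end is trivial (hence Fredholmness of the weighted/periodic linearized operator and the exclusion of energy escaping down the end), and then run the cobordism-to-$\co P^2$'s count at the reducibles. You also correctly identify that this compactness/Fredholm package is the technical heart of \cite{T87}.

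Two points in your sketch would not survive closer scrutiny. First, the concluding lattice step as written (``$N=b_2(K)$, hence $Q_K\cong\langle-1\rangle^{\oplus b_2(K)}$'') is the argument for a \emph{unimodular} form, i.e.\ for $Y$ an integral homology sphere, which is the setting of Taubes's original theorem. Here $Y$ is only a rational homology sphere, so $Q_K$ need not be unimodular, $H^2(K;\z)$ may carry torsion, and the enumeration of reducibles (torsion line bundles, classes of square $-1$ that need not generate) is genuinely more delicate; the passage from the reducible count to ``diagonalizable'' is not the one-line maximal-orthogonal-system argument you give, and this extension of the statement beyond the integral case deserves either a precise reference or its own proof. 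Second, the small-instanton stratum of the Uhlenbeck compactification is modeled on $W\times(0,\lambda_0)$, not $K\times(0,\lambda_0)$: concentrated instantons can a priori be centered anywhere on the non-compact manifold $W$, and showing that the relevant truncated moduli space still yields a usable compact cobordism (i.e.\ controlling the center as well as the background connection down the end) is part of the same end-analysis you deferred, not a consequence of a deformation retraction onto $K$. Neither objection changes the verdict that your overall route is the right one; both are places where ``minor refinements'' hides real work.
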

The assumption about $SU(2)$ representation is essential. 
If the assumption is removed, then one can easily find a counterexample to the theorem.
(For instance, take $Y=\Sigma(2,3,5)$ and $W_0= Y\times I$.)
This is an essential reason 
why we can claim the nonexistence of simply connected definite bounding. 

As a corollary of Theorem \ref{taubes}, we have the following lemma.
\begin{lem} \label{cpt} Let $M$ be a rational homology 3-sphere and $W$ is a negative definite cobordism from $M$ to itself. If $W$ is simply connected, then $|\T_s(M)|\leq 1$.
\end{lem}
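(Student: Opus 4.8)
The plan is to derive Lemma \ref{cpt} from Taubes's theorem (Theorem \ref{taubes}) by a contradiction argument built on an end-periodic manifold. Suppose $|\T_s(M)| \geq 2$, so there exist two simply connected negative definite 4-manifolds $K_0, K_1$ with $\partial K_i = M$ whose intersection forms represent distinct, non-stably-equivalent classes in $\T_s(M)$; in particular, by Donaldson's diagonalization theorem applied to the closed-up picture later, not both can be diagonalizable, so I may assume $K_1$ has non-diagonalizable intersection form. Then form the connected non-compact 4-manifold
\[
W_\infty := K_1 \cup_M W \cup_M W \cup_M \cdots,
\]
where we glue on infinitely many copies of the periodic cobordism $W$ from $M$ to itself. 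This is exactly the shape required in Theorem \ref{taubes}, with $K = K_1$ (simply connected negative definite with $\partial K_1 = M$) and $W_0 = W$ (negative definite cobordism from $M$ to itself).

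Next I would verify the $SU(2)$-hypothesis of Theorem \ref{taubes}: since $W$ is assumed simply connected, $\pi_1(W) = 1$, so there is certainly no non-trivial representation $\pi_1(W) \to SU(2)$. Thus all hypotheses of Taubes's theorem are met, and we conclude that the intersection form of $K_1$ is diagonalizable — contradicting our choice of $K_1$. Hence $|\T_s(M)| \leq 1$. The one subtlety is the reduction to the case of a non-diagonalizable summand: if $|\T_s(M)| \geq 2$, then since the diagonal form $-\langle 1 \rangle^{\oplus n}$ determines a single class under stable equivalence, at least one of the two classes must be represented by a manifold whose form is not (stably) diagonal, and that is the one to feed into Taubes's machine; if instead one worries that a diagonalizable-but-not-standard form could occur, note that any diagonalizable negative definite unimodular-over-$\q$... here one must be slightly careful since $M$ is only a rational homology sphere and the form need not be unimodular. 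Over $\z$ the intersection form of a negative definite 4-manifold bounding a QHS$^3$ has determinant $\pm|H_1(M)|$; still, stable equivalence collapses the "standard" part, so two distinct classes force a genuinely non-diagonalizable representative, and that is all we need.

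The main obstacle I anticipate is making precise the claim "distinct classes in $\T_s(M)$ implies one has non-diagonalizable intersection form." The cleanest route is: if every element of $\T_s(M)$ were diagonalizable, then by the definition of stable equivalence (adding copies of $-\langle 1\rangle$) there would be at most one class, namely the one containing $-\langle 1\rangle^{\oplus n}$ for appropriate $n$ — so $|\T_s(M)| = 1$, contradicting $|\T_s(M)| \geq 2$. Therefore some class has a non-diagonalizable representative $K_1$, and the argument above applies. Everything else is a direct invocation of Theorem \ref{taubes}, so once this reduction is in place the proof is short.
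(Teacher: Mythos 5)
Your proof is essentially the same as the paper's: the paper likewise argues by contradiction, takes a simply connected negative definite $K$ bounding $M$ with non-diagonalizable form (which must exist once $|\T_s(M)|>1$), forms the end-periodic manifold $K\cup_M W\cup_M W\cup_M\cdots$, and invokes Theorem \ref{taubes}. The only difference is that you spell out the (trivial) verification that $\pi_1(W)=1$ kills all nontrivial $SU(2)$-representations, and you meander through a discussion of why $|\T_s(M)|>1$ forces a non-diagonalizable representative; the paper simply asserts the latter without comment, as it is immediate from the definition of stable equivalence once one fixes what ``diagonalizable'' means in the quoted Taubes statement.
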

\begin{proof} 
Suppose that $|\T_s(M)|> 1$. Then there exists a simply connected negative definite 4-manifold $K$ with 
$\partial K =M$ whose intersection form is not diagonalizable. However, the end-periodic manifold
$K \cup_M \cup W \cup_M W \cup_M \cdots $ satisfies 
all assumptions of Theorem \ref{taubes}.  This leads to a contradiction.
\end{proof}

Now we prove Proposition \ref{definite}.
\begin{proof}
We first assume that $\T_s(Y_M) \neq \emptyset$. 
Then there exists a simply connected negative definite 4-manifold $X$ with $\partial X= Y_M$.
Moreover, since $\T_s(-Y)\neq \emptyset$, $-Y$ also bounds a simply connected negative definite 4-manifold $U$. We use $X$, $U$ and $-W_M$ to construct a simply connected negative definite cobordism from $M$ to itself.

We first glue $X$ with $-W_M$ along $Y_M$, and denote it by $X'$. 
(Note that $\partial(-W_M)=-Y_M \amalg (Y\#M\#(-M))$.)
Then $X'$ is negative definite and $\partial X' = Y\#M\#(-M)$.
Furthermore, since $\pi_1(X)=1$ and $i_*: \pi_1(-Y_M) \to \pi_1(-W_M)$ is surjective, 
we have $\pi_1(X')=1$. Next, by attaching two 3-handles to $X'$, we obtain a 4-manifold $X''$
with $\partial X'' = Y \amalg M \amalg (-M)$. Finally, by gluing $X''$ 
with
$U$ along $Y$, 
we have a 4-manifold $W$ with boundary $M \amalg (-M)$. By the construction, it is easy to check that $\pi_1(W)=1$ and $W$ is negative definite.

Now, by applying Lemma \ref{cpt} to $W$, we conclude that $|\T_s(M)| \leq 1$.
However,  since $|\T_s(M)|>1$ is assumed, this leads to a contradiction.
As a consequence, we have $\T_s(Y_M) = \emptyset$.

If we assume $\T_s(-Y_M) \neq \emptyset$, then a similar argument gives $|\T_s(M)| \leq 1$,
which contradicts to the assumption $|\T_s(M)| > 1$. (In this case, use $\T_s(Y) \neq \emptyset$ and $W_M$ instead of $\T_s(-Y) \neq \emptyset$ and $-W_M$.)
This completes the proof.
\end{proof}


\section{Chern-Simons invariants} 
In this section, we give a method for finding an infinite family $\{ M_k \}$ such that  $\{Y_{M_k}\}$ are mutually disjoint. The goal of this section is to prove the following proposition. 
Here we denote the $(p,q,r)$-Brieskorn sphere by $\Sigma(p,q, r)$.
\begin{prop}\label{inf}
Let $Y$ be a rational homology 3-sphere, $p,q$ coprime integers, $M_n := \Sigma(p,q,pqn-1)$
and $Y_n:=Y_{M_n}$ a rational homology 3-sphere given by Proposition~\ref{main prop}.
Then there exists a numerical sequence $\{n_k\}_{k=1}^{\infty}$
such that 
if $k \neq k'$, then $Y_{n_k}$ is diffeomorphic to neither $Y_{n_{k'}}$ nor $-Y_{n_{k'}}$.
\end{prop}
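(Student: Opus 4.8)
The plan is to use the Chern-Simons invariant of flat connections as the distinguishing invariant. For an integral homology 3-sphere the $SU(2)$ Chern-Simons invariants of flat connections form a well-defined finite subset $CS(M_n)\subset \R/\z$, and this set is a diffeomorphism invariant (insensitive to orientation only up to sign, i.e. $CS(-M)=-CS(M)$). For the Brieskorn spheres $M_n=\Sigma(p,q,pqn-1)$ the set $CS(M_n)$ is known explicitly by the work of Fintushel--Stern and Kirk--Klassen: the Chern-Simons values are certain rational numbers whose denominators grow with $pqn-1$, so one can arrange that the sets $CS(M_n)$ are "spread out" — in particular that $\min\{|x|:x\in CS(M_n)\setminus\{0\}\}\to 0$, or more precisely that the nonzero Chern-Simons values accumulate only at $0$ as $n\to\infty$, and do so at predictable rates. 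The first step is therefore to record this explicit description and extract from it a subsequence $\{n_k\}$ along which the sets $CS(M_{n_k})$ are mutually "incompatible" in a sense to be made precise below.

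The second and key step is to relate $CS(Y_{M_n})$ to $CS(M_n)$. By Proposition \ref{main prop} there is a homology cobordism $W_{M_n}$ from $Y\#M_n\#(-M_n)$ to $Y_n$ with $i_*:\pi_1(Y\#M_n\#(-M_n))\to\pi_1(W_{M_n})$ a bijection and $i_*:\pi_1(Y_n)\to\pi_1(W_{M_n})$ surjective. Consequently every $SU(2)$ representation of $\pi_1(Y_n)$ factors through $\pi_1(W_{M_n})\cong\pi_1(Y\#M_n\#(-M_n))=\pi_1(Y)*\pi_1(M_n)*\pi_1(M_n)$, and conversely every representation of the latter extends over $W_{M_n}$ and restricts to $Y_n$. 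Since the Chern-Simons invariant is a homology-cobordism-type quantity computable from the flat connection restricted to either end, one gets the containment
\[
CS(Y_n)\subseteq CS(Y) + CS(M_n) + CS(M_n) \pmod{\z},
\]
i.e. $CS(Y_n)$ is contained in the (finite) set of sums $c_0+c_1+c_2$ with $c_0\in CS(Y)$, $c_1,c_2\in CS(M_n)$. The relevant point is that $CS(Y)$ is a \emph{fixed} finite set independent of $n$. One then argues that, conversely, each value $2c$ with $c\in CS(M_n)$ (coming from the diagonal representation on $\pi_1(Y)*\pi_1(M_n)*\pi_1(M_n)$ that is trivial on $\pi_1(Y)$ and equal on the two $\pi_1(M_n)$ factors, which is nontrivial precisely when the representation of $\pi_1(M_n)$ is) actually occurs in $CS(Y_n)$ — or at least enough such values occur to see the asymptotics — so that $CS(Y_n)$ genuinely "remembers" $CS(M_n)$ up to a bounded ambiguity coming from $CS(Y)$.

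The final step is a pigeonhole/asymptotics argument: choose $\{n_k\}$ so that the relevant Chern-Simons values of $M_{n_k}$ (say, the minimal nonzero one, suitably normalized) decrease so fast that they cannot be matched, even after shifting by the fixed finite set $CS(Y)-CS(Y)$ or reflecting by $x\mapsto -x$, by any value available from a different $M_{n_{k'}}$. Concretely, if $k\ne k'$ and $Y_{n_k}$ were diffeomorphic to $Y_{n_{k'}}$ or to $-Y_{n_{k'}}$, then $CS(Y_{n_k})=\pm CS(Y_{n_{k'}})$, hence the $n_k$-data would be trapped inside $\pm(CS(Y)+CS(Y))+\,2\,CS(M_{n_{k'}})$; choosing $n_k\gg n_{k'}$ with the gap estimates makes this impossible, since the nonzero values forced into $CS(Y_{n_k})$ by $M_{n_k}$ are strictly smaller than the smallest nonzero value this finite right-hand set can produce. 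Iterating this choice produces the desired sequence. I expect the main obstacle to be the middle step: making rigorous the claim that the Chern-Simons values of $Y_n$ reflect those of $M_n$ through the homology cobordism $W_{M_n}$ — one must check that a flat connection on $Y\#M_n\#(-M_n)$ and the flat connection it determines on $Y_n$ (via the common extension over $W_{M_n}$) have equal Chern-Simons invariants mod $\z$, which requires care with the noncompact/cobordism formulation of $CS$ and with the fact that $i_*$ on $\pi_1(Y_n)$ is only surjective, not injective — so one must rule out, or account for, extra flat connections on $Y_n$ whose pullback data could confuse the count.
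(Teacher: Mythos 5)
Your overall strategy---Chern--Simons invariants of flat connections, transported through the cobordism $W_{M_n}$ via the $\pi_1$ conditions of Proposition \ref{main prop}, followed by an asymptotic separation argument---is the same as the paper's, but the step you yourself flag as the ``main obstacle'' is a genuine gap, and it is precisely the step the paper's proof is engineered to avoid. Your argument needs the containment $CS(Y_n)\subseteq CS(Y)+CS(M_n)+CS(M_n)$ (so that $CS(Y_{n_k})$ is ``trapped'' inside a set built from $CS(M_{n_{k'}})$ when the two manifolds are diffeomorphic). That containment would require every flat connection on $Y_n$ to extend over $W_{M_n}$, i.e.\ every representation of $\pi_1(Y_n)$ to factor through $i_*:\pi_1(Y_n)\to\pi_1(W_{M_n})$. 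But Proposition \ref{main prop} only gives \emph{surjectivity} of this map, which does not force representations to kill its kernel; there may be flat connections on $Y_n$ with uncontrolled Chern--Simons values. Only the other direction is available: a flat connection on $Y\#M_n\#(-M_n)$ extends over $W_{M_n}$ (bijectivity on that end) and restricts to a flat connection on $Y_n$ with the same Chern--Simons value mod $\z$, since the extension is flat and $\int_{W}\mathrm{Tr}(F\wedge F)=0$.

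The paper shows that this one-sided information suffices. It works with the single number $\epsilon(Y)\in(0,1]$, the minimal nonzero Chern--Simons value (set to $1$ if none exists), which is a positive diffeomorphism invariant. Transport through $W_{M_n}$ plus the connected-sum inequality gives only an upper bound, $\epsilon(Y_n)\le\epsilon(M_n)=\frac{1}{pq(pqn-1)}$, using the Fintushel--Stern/Furuta computation. The sequence $\{n_k\}$ is then chosen \emph{greedily}: having fixed $n_1,\dots,n_m$, the finitely many numbers $\epsilon(\pm Y_{n_l})$, $l\le m$, are positive (whatever they are---no lower bound in terms of $M_{n_l}$ is needed), so one can pick $n_{m+1}$ with $\frac{1}{pq(pqn_{m+1}-1)}$ strictly smaller than all of them; then $\epsilon(Y_{n_{m+1}})<\epsilon(\pm Y_{n_l})$ for all $l\le m$, which already rules out a diffeomorphism. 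I recommend you restructure your final step this way: compare against the actual (unknown but fixed and positive) values $\epsilon(\pm Y_{n_{k'}})$ of the previously chosen manifolds, rather than against a set derived from $CS(M_{n_{k'}})$, and the unresolved ``converse'' containment becomes unnecessary.
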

In \cite{CG88}, it is shown that $|\T_s(\Sigma(p,q,pqn-1))|>1$. Hence we can apply Proposition~\ref{definite}
to $Y_{M_n}$ whenever $Y$ satisfies $\T_s(Y) \neq \emptyset$ and $\T_s (-Y) \neq \emptyset$.
In order to prove Proposition \ref{inf},
we use the Chern-Simons invariants for 3-manifolds.
Here we recall the Chern-Simons invariants.
For a given 3-manifold $Y$, let $P_Y$ be the product $SO(3)$ bundle.
First we introduce several definitions which are used for gauge theory.
We denote by $\text{Map}(Y,SO(3))$ the set of smooth maps from $Y$ to $SO(3)$.
The group structure on $SO(3)$ induces a group structure on $\text{Map}(Y,SO(3))$.
 Let $\A^f_Y$ be the set of $SO(3)$-flat connections on $P_Y$. Since $\text{Map}(Y,SO(3))$ can be identified with the set of automorphisms on $P_Y$, $\text{Map}(Y,SO(3))$ acts on $\A^f_Y$ by the pull-back of connections. The set of $SO(3)$-connections on $P_Y$ can be identified with the $\mathfrak{so}(3)$-valued $1$-forms on $Y$. Therefore we regard any element of $\A^f_Y$ as an element of $\Om^1(Y)\otimes  \mathfrak{so}(3)$.
Under these identifications, the action of $\text{Map}(Y,SO(3))$ on $\A^f_Y$ is written: 
\[
g^*a= g^{-1} d g+ g^{-1} ag, 
\]
where $a \in \Om^1(Y)\otimes  \mathfrak{so}(3)$.
This action defines the quotient
\[
{R}(Y):=\A^f_Y / \text{Map}(Y,SO(3)) .
\]
Then the Chern-Simons functional  
\begin{align}\label{cs}
\widetilde{cs}: \A^f_Y \ri \R
\end{align}
is defined by 
\[
\widetilde{cs}(a)= \frac{1}{8\pi^2} \int_Y \text{Tr}( a\wedge da + \frac{2}{3} a \wedge a\wedge a),
\]
where $a \in \Om^1(Y)\otimes  \mathfrak{so}(3)$. It is known that 
\[
\widetilde{cs}(g^*a) = \widetilde{cs}(a) + \text{deg} (g),
\]
where $g \in \text{Map}(Y,SO(3))$, $a \in \Om^1(Y) \otimes  \mathfrak{so}(3)$ and  $\text{deg} (g)$ is the mapping degree of $g$. Therefore the map \eqref{cs} descends the map: 
\[
 cs: {R}(Y) \ri \R/\z.
\]

Since the space $R(Y)$ is compact and the map $cs$ is locally constant, one can show the set 
$\im cs \subset \R/\z$ is a finite set.

By using the Chern-Simons functional, Furuta \cite{Fu90} defines a numerical invariant
$\epsilon$ as follows. (In \cite{FS90}, Fintushel and Stern also consider such an invariant.) Here we identify $(0,1]$ with $\R/\z$ via the quotient map $\R \ri \R/\z$ and regard $cs$ as a map from ${R}(Y)$ to $(0,1]$.

\begin{defn}For a 3-manifold $Y$, we define
\[
\epsilon(Y):= \left\{
\begin{array}{cl}\displaystyle
 \min_{a \in {cs}^{-1}(0,1)}  cs(a)  &  ({cs}^{-1}(0,1) \neq \emptyset) \\
 1 & ({cs}^{-1}(0,1) = \emptyset)
\end{array}\right.
\]

\end{defn}
There is a connected sum inequality for $\epsilon$ stated as follows. 
\begin{lem}
\label{sum}
For any two 3-manifolds $Y_1$ and $Y_2$, we have
$$
\epsilon(Y_1\# Y_2) \leq \min\{ \epsilon(Y_1), \epsilon(Y_2) \}.
$$
\end{lem}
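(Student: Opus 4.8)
The plan is to prove the connected sum inequality $\epsilon(Y_1 \# Y_2) \leq \min\{\epsilon(Y_1), \epsilon(Y_2)\}$ by constructing, from a flat connection on one summand, a flat connection on the connected sum with the same Chern--Simons value. First I would recall that a closed $3$-manifold $Y_1 \# Y_2$ is obtained by removing an open ball from each $Y_i$ and gluing along the resulting $S^2$ boundaries. A flat $SO(3)$-connection $a$ on $P_{Y_1}$ restricts to a flat connection on $Y_1 \setminus B^3$; since $\pi_1(S^2) = 1$, any flat connection is trivial near the gluing sphere, so after a gauge transformation we may assume $a$ is in a standard trivial form on a collar of the removed ball. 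We can then glue it to the trivial (product) flat connection on $Y_2 \setminus B^3$ to obtain a flat connection $\widehat{a}$ on $P_{Y_1 \# Y_2}$. By symmetry the same works starting from a flat connection on $Y_2$.

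The key computational step is that $\widetilde{cs}(\widehat{a}) = \widetilde{cs}(a)$ in $\R/\z$. This follows because the Chern--Simons integrand is supported away from the region where we glued in the trivial connection: the contribution from $Y_2 \setminus B^3$ with the trivial connection vanishes, and the integral over $Y_1 \setminus B^3$ with $a$ in the chosen gauge agrees (modulo $\z$, absorbing any boundary term coming from the puncture) with $\widetilde{cs}(a)$ over $Y_1$. More carefully, one notes that the trivial connection has $cs$-value $0$ and that cutting and regluing along an $S^2$ where the connection is gauge-trivial does not change the value in $\R/\z$; this can be seen by choosing a bounding $4$-manifold description or by a direct Stokes-type argument on the collar. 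Hence $cs(\widehat{a}) = cs(a)$ as elements of $(0,1] \cong \R/\z$.

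Given this, the proof concludes quickly. If $cs^{-1}(0,1) = \emptyset$ for both $Y_1$ and $Y_2$, there is nothing to prove since the right-hand side is $1$ and $\epsilon$ is always $\leq 1$. Otherwise, suppose without loss of generality that $\epsilon(Y_1) = cs(a_0)$ for some $a_0 \in R(Y_1)$ with $cs(a_0) \in (0,1)$, realizing the minimum. Then $\widehat{a_0}$ is a flat connection on $Y_1 \# Y_2$ with $cs(\widehat{a_0}) = cs(a_0) \in (0,1)$, so $cs^{-1}(0,1) \neq \emptyset$ for $Y_1 \# Y_2$ and $\epsilon(Y_1 \# Y_2) \leq cs(\widehat{a_0}) = \epsilon(Y_1)$. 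Running the same argument with $Y_2$ gives $\epsilon(Y_1 \# Y_2) \leq \epsilon(Y_2)$, and combining yields the claimed inequality.

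The main obstacle I anticipate is making the gluing argument for flat connections and the invariance $cs(\widehat{a}) = cs(a)$ fully rigorous: one must carefully handle the choice of gauge near the connect-sum sphere, check that the glued connection is genuinely smooth (or smoothable), and verify that no stray integer or fractional contribution arises from the collar region. This is a standard but slightly technical point in gauge theory; I would phrase it as: since the connection is flat and the gluing region is simply connected, both connections extend to a flat connection on the trace of the connect-sum operation (a cobordism built from a single $1$-handle relating $Y_1 \amalg Y_2$ to $Y_1 \# Y_2$), and the Chern--Simons value is unchanged because this cobordism carries a flat connection restricting appropriately, so the relative Chern--Simons contribution is zero in $\R/\z$. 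Everything else is bookkeeping with the definition of $\epsilon$.
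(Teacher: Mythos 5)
Your proposal is correct and follows essentially the same approach as the paper: take a flat connection $\rho$ realizing $\epsilon(Y_1)$, connect-sum it with the product connection $\theta$ on $Y_2$ to get a flat connection $\rho\#\theta$ on $Y_1\#Y_2$, and use $cs(\rho\#\theta)=cs(\rho)$ to deduce the inequality (then symmetrize). The paper's proof is terser — it states the construction and the equality and leaves the gauge-fixing, gluing, and trivial case to the reader — whereas you spell these out explicitly, but the underlying argument is identical.
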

\begin{proof}
For proving the lemma, it suffices to prove that 
$\epsilon(Y_1\# Y_2) \leq \epsilon(Y_1)$.
Let $\rho$ be an $SO(3)$ flat connection on $Y_1$ satisfying $cs(\rho)=\epsilon(Y_1)$
and $\theta$ the product connection on $Y_2$. 
By taking the connected sum of $\rho_M$ and $\theta$, we get an $SO(3)$ flat connection 
$\rho \# \theta$ over $Y_1 \# Y_2$.
Then it follows from the definitions of $cs$ and $\epsilon$ that 
$$
\epsilon(Y_1 \# Y_2) \leq cs(\rho \# \theta)=cs(\rho) = \epsilon(Y_1). 
$$

\end{proof}
Next, we prove the following lemma. This lemma says that if we have a nice cobordism, then we can estimate the value of $\epsilon$.
\begin{lem}\label{pi1}Let $Y_1$ and $Y_2$ be 3-manifolds.
Suppose that there is a cobordism $W$ from $Y_1$ to $Y_2$ such that 
$i_*: \pi_1(Y_1) \ri \pi_1(W)$ is bijective. Then the inequality 
\[
\epsilon(Y_2)\leq \epsilon(Y_1) 
\]
holds. 

\end{lem}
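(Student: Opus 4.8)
The plan is to exploit the bijection $i_*\colon \pi_1(Y_1)\to \pi_1(W)$ to transport any $SO(3)$ flat connection on $Y_1$ across $W$ to one on $Y_2$ with the same Chern--Simons value, and conversely to control which connections can arise on $Y_2$. Concretely, a flat connection on a 3-manifold is, up to gauge, the same as a conjugacy class of representations $\pi_1\to SO(3)$ (together with, in the $SO(3)$ rather than $SU(2)$ case, a choice of lift/obstruction class in $H^1(\cdot;\z/2)$, but for rational homology spheres $H^1(\cdot;\z/2)=0$, so this subtlety disappears and $R(Y)$ is literally the representation variety). Since $i_*$ is an isomorphism, restriction along the inclusions $Y_1\hookrightarrow W$ and $Y_2\hookrightarrow W$ gives a bijection between representations of $\pi_1(Y_1)$ and representations of $\pi_1(Y_2)$ that factor through $\pi_1(W)$: every representation of $\pi_1(Y_1)$ extends uniquely over $W$ (because $\pi_1(Y_1)\cong\pi_1(W)$), and then restricts to $Y_2$.

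First I would set up the correspondence carefully: given $[\rho]\in R(Y_1)$, use the isomorphism $i_*$ to get a representation $\tilde\rho\colon\pi_1(W)\to SO(3)$, and let $\rho'$ be its restriction to $\pi_1(Y_2)$ via $j_*\colon\pi_1(Y_2)\to\pi_1(W)$. This produces a flat connection $a'$ on $Y_2$ (the representations are flat, hence give flat $SO(3)$ bundles; triviality of $H^1(Y_2;\z/2)$ lets us identify the bundle with the product $SO(3)$ bundle $P_{Y_2}$). Second, I would verify the Chern--Simons values agree: the flat connection $a$ on $Y_1$ and $a'$ on $Y_2$ both extend to a flat connection $A$ on $W$ (coming from $\tilde\rho$), and since $A$ is flat its curvature vanishes, so the relative Chern--Simons / Chern--Weil computation of $\int_W p_1$-type term forces $\widetilde{cs}(a)\equiv\widetilde{cs}(a')\pmod{\z}$; equivalently, one invokes the standard fact that the Chern--Simons invariant $cs\colon R(Y)\to\R/\z$ is a diffeomorphism (indeed flat-cobordism) invariant in the sense that cobordant-through-a-flat-connection pairs have equal values. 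Thus $cs(\rho')=cs(\rho)$ in $\R/\z$.

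Third, I would run the counting argument defining $\epsilon$. If $cs^{-1}(0,1)=\emptyset$ for $Y_1$ then $\epsilon(Y_1)=1\ge\epsilon(Y_2)$ trivially. Otherwise pick $[\rho]\in R(Y_1)$ realizing $cs(\rho)=\epsilon(Y_1)\in(0,1)$; the corresponding $[\rho']\in R(Y_2)$ has $cs(\rho')=\epsilon(Y_1)\in(0,1)$, so $cs^{-1}(0,1)$ is nonempty for $Y_2$ and $\epsilon(Y_2)=\min_{a\in cs^{-1}(0,1)}cs(a)\le cs(\rho')=\epsilon(Y_1)$. This gives the claimed inequality $\epsilon(Y_2)\le\epsilon(Y_1)$.

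The main obstacle I anticipate is the bookkeeping around $SO(3)$ versus $SU(2)$ connections and the identification of $R(Y)$ with a representation variety, including checking that for rational homology spheres there is a unique (trivial) topological type of $SO(3)$ bundle so that restriction of flat connections along $Y_i\hookrightarrow W$ lands where we want; and, relatedly, making the equality $cs(\rho)=cs(\rho')$ rigorous, i.e. confirming that a flat connection on $W$ interpolating between $a$ and $a'$ forces equality of Chern--Simons invariants mod $\z$ (this is where the hypothesis that $i_*$ is bijective — not merely surjective — is used, since we need every flat connection on $Y_1$ to extend over $W$ so that the whole of $R(Y_1)$, and in particular its $cs$-minimizer, is accounted for on $Y_2$). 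Once these identifications are in place the inequality follows immediately from the definition of $\epsilon$.
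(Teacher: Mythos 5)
Your proposal is correct and follows essentially the same route as the paper: extend the minimizing flat connection over $W$ via the holonomy correspondence using the isomorphism $i_*$, restrict to $Y_2$, and use vanishing curvature on $W$ to get $cs(\rho)=cs(\tilde\rho|_{Y_2})$ in $\R/\z$, whence $\epsilon(Y_2)\leq\epsilon(Y_1)$. The extra bookkeeping you flag (triviality of the $SO(3)$ bundle, the degenerate case $cs^{-1}(0,1)=\emptyset$) is handled correctly and only makes the argument more careful than the paper's.
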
 

\begin{proof} 
Suppose that $\rho$ is a $SO(3)$ flat connection satisfying $cs(\rho) =\epsilon(Y_1)$. 
Since $\pi_1(Y_1) \ri \pi_1(W)$ is bijective, we can extend $\rho$ over $W$ using the holonomy correspondence. We denote the extended connection by $\tilde{\rho}$. Then the equalities
\[
0=\frac{1}{8\pi^2} \int_{W} \text{Tr}( F(\rho) \wedge F(\rho))= cs(\rho)-cs(\tilde{\rho}|_{Y_2})
\]
hold. Therefore, we have
\[
 \epsilon(Y_2) \leq cs(\tilde{\rho}|_{Y_2})= cs(\rho)= \epsilon(Y_1).
 \]
\end{proof}
In our situation, we have the following estimate for $\epsilon( Y_M)$.  
\begin{cor}
\label{pi1 cor}
For any $Y$ and $M$, we have
$$
\epsilon(Y_M) \leq \epsilon(M).
$$ 
\end{cor}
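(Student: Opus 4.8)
The goal is to show $\epsilon(Y_M)\le \epsilon(M)$, where $Y_M$ and the homology cobordism $W_M$ from $Y'_M = Y\#M\#(-M)$ to $Y_M$ are as in Proposition~\ref{main prop}. The plan is to combine Lemma~\ref{sum}, Lemma~\ref{pi1}, and the second conclusion of Proposition~\ref{main prop} (that $i_*\colon \pi_1(Y'_M)\to\pi_1(W_M)$ is bijective). First I would apply Lemma~\ref{sum} to the connected sum $Y'_M = Y\#M\#(-M)$: regarding this as $(Y\#(-M))\# M$ (or simply iterating the two-summand inequality), we get
\[
\epsilon(Y\#M\#(-M)) \le \epsilon(M).
\]
Next, Proposition~\ref{main prop}(2) furnishes a cobordism $W_M$ from $Y'_M$ to $Y_M$ with $i_*\colon\pi_1(Y'_M)\to\pi_1(W_M)$ bijective, so Lemma~\ref{pi1} applies and yields
\[
\epsilon(Y_M) \le \epsilon(Y'_M) = \epsilon(Y\#M\#(-M)).
\]
Chaining the two inequalities gives $\epsilon(Y_M)\le\epsilon(M)$, which is exactly the claim.

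The only point that requires a moment's care is the direction of the cobordism in the hypothesis of Lemma~\ref{pi1}: that lemma takes a cobordism from $Y_1$ to $Y_2$ with $\pi_1$-iso on the incoming end and concludes $\epsilon(Y_2)\le\epsilon(Y_1)$. Here $W_M$ is a cobordism from $Y'_M$ to $Y_M$ and the bijectivity is on $\pi_1(Y'_M)$, so we take $Y_1 = Y'_M$ and $Y_2 = Y_M$, matching the lemma's setup exactly; no need to reverse $W_M$. I expect no real obstacle in this corollary — it is a purely formal concatenation of results already established — but the one thing to double-check is that Lemma~\ref{sum}'s two-summand statement indeed iterates to a triple connected sum, which it does since connected sum is associative and $\epsilon(A\#B\#C)\le\epsilon((A\#B)\#C)\le\epsilon(C)$.

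Thus the proof is a one-line deduction:
\[
\epsilon(Y_M)\ \le\ \epsilon(Y\#M\#(-M))\ \le\ \epsilon(M),
\]
where the first inequality is Lemma~\ref{pi1} applied to $W_M$ together with Proposition~\ref{main prop}(2), and the second is Lemma~\ref{sum}.
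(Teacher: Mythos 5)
Your proof is correct and matches the paper's argument exactly: apply Lemma~\ref{pi1} to the cobordism $W_M$ (using Proposition~\ref{main prop}(2)) to get $\epsilon(Y_M)\le\epsilon(Y\#M\#(-M))$, then apply Lemma~\ref{sum} to get $\epsilon(Y\#M\#(-M))\le\epsilon(M)$. Your extra remark about iterating the two-summand inequality to a triple connected sum is a reasonable sanity check but adds nothing beyond what the paper takes for granted.
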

\begin{proof}
By applying Lemma~\ref{pi1} to $W_M$, we have
$$
\epsilon(Y_{M}) \leq \epsilon(Y\#M \# (-M)) \leq \epsilon(M),
$$
where the second inequality follows from Lemma~\ref{sum}.
\end{proof}
Now, let us prove Proposition \ref{inf}.
\def\proofname{Proof of Proposition \ref{inf}}
\begin{proof}
It is proved by Furuta \cite{Fu90} and Fintushel-Stern \cite{FS90} that

\[
\epsilon(M_n) = \frac{1}{pq(pqn-1)}.
\]
Therefore, it follows from Corollary~\ref{pi1 cor} that for any $n$, we have
\[
\epsilon(Y_n) \leq \epsilon(M_n) = \frac{1}{pq(pqn-1)}.
\]
We construct a numerical sequence $\{n_k\}_{k=1}^{\infty}$ by induction.
First, we define $n_1:=1$. Next, suppose that $\{n_k\}_{k=1}^{m}$ is defined for some $m$. 
Since $\frac{1}{pq(pqn-1)} \to 0$ $(n \to \infty)$, there exists an integer $n$ such that
\[
\frac{1}{pq(pqn-1)} < \min_{1 \leq k \leq m}\{ \epsilon(Y_{n_k}), \epsilon(-Y_{n_k}) \}.
\]
Then we define $n_{m+1}:=n$. 

Now, let us prove that $\{n_k\}_{k}^{\infty}$ is the desired sequence. 
Suppose that $k \neq k'$.
Without loss of generality, we may assume that $k > k'$. Then, by the definition of $\{n_k\}_{k=1}^{\infty}$, the inequalities
\[
\epsilon(Y_{n_{k}}) \leq \frac{1}{pq(pqn_{k}-1)} < \min_{l < k}\{ \epsilon(Y_{n_l}), \epsilon(-Y_{n_l}) \}
\]
hold. In particular, since $k'<k$, we have
\[
\epsilon(Y_{n_{k}}) < \min \{ \epsilon(Y_{n_{k'}}), \epsilon(-Y_{n_{k'}}) \}.
\]
This proves that $Y_{n_k}$ is diffeomorphic to neither $Y_{n_{k'}}$ nor $-Y_{n_{k'}}$.
\end{proof}
\def\proofname{Proof}

\section{Proof of Main Theorem}

In this section, we prove Theorem~\ref{main thm},
which is stated as follows.
\setcounter{section}{1}
\begin{thm}
For any rational homology 3-sphere $Y$ satisfying $\T_s(Y)\neq \emptyset $ and $\T_s(-Y)\neq \emptyset$, there exist infinitely many rational homology 3-spheres $\{Y_k\}_{k=1}^{\infty}$ 
which satisfy the following conditions.
\begin{enumerate}
\item $Y_k$ is homology cobordant to $Y$. 
\item $\T(Y_k)=\T(Y) \neq \emptyset$ and $\T(-Y_k)=\T(-Y) \neq \emptyset$.  
\item $\T_s(Y_k)=\emptyset $ and $\T_s(-Y_k)=\emptyset$.
\item If $k\neq k'$ then $Y_k$ is diffeomorphic to neither $Y_{k'}$ nor $-Y_{k'}$.
\item Each $Y_k$ is irreducible  and toroidal.
\end{enumerate}
\end{thm}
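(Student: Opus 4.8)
The plan is to assemble Theorem~\ref{main thm} by combining the three main ingredients already established: Proposition~\ref{main prop} (Auckly-type construction of $Y_M$ and the homology cobordism $W_M$), Proposition~\ref{definite} (vanishing of $\T_s$ and $\T_s(-)$ under the hypothesis $|\T_s(M)|>1$), and Proposition~\ref{inf} (the Chern--Simons/$\epsilon$-invariant argument producing infinitely many mutually non-diffeomorphic members). So the argument is essentially bookkeeping: fix the input data correctly and check each of the five conditions.

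First I would fix, for the given $Y$ with $\T_s(Y)\neq\emptyset$ and $\T_s(-Y)\neq\emptyset$, a pair of coprime integers $p,q$ and set $M_n:=\Sigma(p,q,pqn-1)$; by \cite{CG88} we have $|\T_s(M_n)|>1$ for every $n$, and since each $M_n$ is an integer homology 3-sphere, $Y\#M_n\#(-M_n)$ is homology cobordant to $Y$. Apply Proposition~\ref{main prop} to each pair $(Y,M_n)$ to obtain $Y_n:=Y_{M_n}$ together with the homology cobordism $W_{M_n}$ from $Y\#M_n\#(-M_n)$ to $Y_n$. Then invoke Proposition~\ref{inf} to extract a subsequence $\{n_k\}_{k=1}^\infty$ with the property that $Y_{n_k}$ is diffeomorphic to neither $Y_{n_{k'}}$ nor $-Y_{n_{k'}}$ whenever $k\neq k'$, and define $Y_k:=Y_{n_k}$.

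Next I would verify the five conditions in turn. Condition (1): the composite cobordism $(W\text{ from }Y\text{ to }Y\#M_{n_k}\#(-M_{n_k}))\cup W_{M_{n_k}}$, where the first piece is a homology cobordism witnessing that connected-summing with the homology sphere $M_{n_k}$ preserves the homology cobordism class, is a homology cobordism from $Y$ to $Y_k$; hence $Y_k$ is homology cobordant to $Y$. Condition (2): since $\T(-)$ is invariant under rational (in particular integral) homology cobordism, $\T(Y_k)=\T(Y)$ and $\T(-Y_k)=\T(-Y)$, and these are nonempty because $\T_s(Y)\subseteq\T(Y)$ and $\T_s(-Y)\subseteq\T(-Y)$ are nonempty by hypothesis. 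Condition (3): apply Proposition~\ref{definite} with $M=M_{n_k}$, using $\T_s(Y)\neq\emptyset$, $\T_s(-Y)\neq\emptyset$ and $|\T_s(M_{n_k})|>1$, to conclude $\T_s(Y_k)=\T_s(-Y_k)=\emptyset$. Condition (4): this is exactly the conclusion of Proposition~\ref{inf} for the chosen subsequence. Condition (5): $Y_k=Y_{M_{n_k}}$ is irreducible and toroidal by the third clause of Proposition~\ref{main prop}.

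The proof is routine once the three propositions are in hand; the only point that needs a sentence of care is condition (1), namely that $Y\#M\#(-M)$ is homology cobordant to $Y$ when $M$ is an integer homology 3-sphere — this is standard (take the boundary-connected-sum trace of the obvious cobordism realizing $M\#(-M)$ as homology-cobordant to $S^3$, or equivalently note that $M\#(-M)$ bounds a homology ball and splice), and it is already flagged parenthetically in the introduction. There is no genuine obstacle remaining at this stage; the real content lives in Propositions~\ref{main prop}, \ref{definite}, and \ref{inf}, which are proved in Sections 2--4.

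\begin{proof}
Let $Y$ be a rational homology 3-sphere with $\T_s(Y)\neq\emptyset$ and $\T_s(-Y)\neq\emptyset$. Fix coprime integers $p,q$ and set $M_n:=\Sigma(p,q,pqn-1)$ for $n\geq 1$. By \cite{CG88} we have $|\T_s(M_n)|>1$ for all $n$. Since each $M_n$ is an integer homology 3-sphere, $M_n\#(-M_n)$ bounds a compact 4-manifold which is a homology $S^3\times[0,1]$, and so $Y\#M_n\#(-M_n)$ is homology cobordant to $Y$.

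For each $n$, apply Proposition~\ref{main prop} to the pair $(Y,M_n)$ to obtain a rational homology 3-sphere $Y_n:=Y_{M_n}$ and a homology cobordism $W_{M_n}$ from $Y\#M_n\#(-M_n)$ to $Y_n$ satisfying conditions (1)--(3) of that proposition. By Proposition~\ref{inf} there is a sequence $\{n_k\}_{k=1}^{\infty}$ such that $Y_{n_k}$ is diffeomorphic to neither $Y_{n_{k'}}$ nor $-Y_{n_{k'}}$ whenever $k\neq k'$. Define $Y_k:=Y_{n_k}$.

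We check the five conditions.

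(1) Composing a homology cobordism from $Y$ to $Y\#M_{n_k}\#(-M_{n_k})$ (which exists by the first paragraph) with $W_{M_{n_k}}$ yields a homology cobordism from $Y$ to $Y_k$; hence $Y_k$ is homology cobordant to $Y$.

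(2) Since $\T$ is invariant under rational homology cobordism, condition (1) gives $\T(Y_k)=\T(Y)$ and $\T(-Y_k)=\T(-Y)$. These are nonempty because $\emptyset\neq\T_s(Y)\subseteq\T(Y)$ and $\emptyset\neq\T_s(-Y)\subseteq\T(-Y)$.

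(3) Apply Proposition~\ref{definite} with $M=M_{n_k}$: the hypotheses $\T_s(Y)\neq\emptyset$, $\T_s(-Y)\neq\emptyset$ and $|\T_s(M_{n_k})|>1$ hold, so $\T_s(Y_k)=\T_s(-Y_k)=\emptyset$.

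(4) This is the defining property of the sequence $\{n_k\}$ from Proposition~\ref{inf}.

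(5) By condition (3) of Proposition~\ref{main prop}, $Y_k=Y_{M_{n_k}}$ is irreducible and toroidal.
\end{proof}
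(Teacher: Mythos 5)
Your proof is correct and follows essentially the same route as the paper's: apply Proposition~\ref{main prop} to $(Y,\Sigma(p,q,pqn-1))$, extract the subsequence from Proposition~\ref{inf}, and check the five conditions using Proposition~\ref{definite}, the homology-cobordism invariance of $\T$, and the fact that $|\T_s(\Sigma(p,q,pqn-1))|>1$ by \cite{CG88}. The only cosmetic difference is that the paper fixes $(p,q)=(2,3)$ while you keep $p,q$ general, and you spell out the composite homology cobordism for condition (1) slightly more explicitly.
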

\setcounter{section}{5}
\begin{proof}
Let $M_n:=\Sigma(2,3,6n-1)$, $Y_{M_n}$ be a rational homology 3-sphere given by Propositon~\ref{main prop} and $\{n_k\}_{k=1}^{\infty}$ a numerical sequence given by 
Proposition~\ref{inf}.
Then we define $Y_k := Y_{M_{n_{k}}}$. Let us prove that $\{Y_k\}_{k=1}^{\infty}$is the desired family.

First, since $Y_k$ is homology cobordant to $Y \# M_{n_k} \# (-M_{n_k})$ and $M_{n_k}$ is an integer homology 3-sphere, $Y_k$ is homology cobordant to $Y$ for any $k$.

Second, since $\T$ is a homology cobordism invariant, $\T(Y) \supset \T_s(Y)$, and we assume that $\T_s(Y) \neq \emptyset$ and $\T_s(-Y) \neq \emptyset$, both
$\T(Y_k)=\T(Y) \neq \emptyset$ and $\T(-Y_k)=\T(-Y) \neq \emptyset$ hold for any $k$.

Third, since $|M_{n_k}|>1$ holds for any $k$, it follows from 
Proposition~\ref{definite} that $\T_s(Y_k)=\emptyset $ and $\T_s(-Y_k)=\emptyset$.

Fourth, it follows from Proposition~\ref{inf}
that if $k\neq k'$ then $Y_k$ is diffeomorphic to neither $Y_{k'}$ nor $-Y_{k'}$.

Finally, it follows from Proposition \ref{main prop} that
each $Y_k$ is irreducible and toroidal.
This completes the proof.
\end{proof}

\bibliographystyle{plain}
\bibliography{tex}

\end{document}